\documentclass[12pt]{article}
\usepackage{amsmath,amsthm,amssymb,amsfonts, graphicx}
\usepackage{authblk}
\usepackage{upgreek}
\usepackage{amsrefs}
\usepackage[unicode,breaklinks=true,colorlinks=true]{hyperref}
\usepackage[left=1.25in,right=1.25in,top=0.7in,bottom=1in]{geometry}
\usepackage{longtable}
\usepackage{epstopdf,tcolorbox}
\usepackage{empheq}
\usepackage{authblk}

\usepackage{titletoc}
\titlecontents{section}[1.5em]{\vspace{-2pt}}%
    {\thecontentslabel\enspace\enspace}%
    {}%
    {\titlerule*[0.5pc]{.}\contentspage}%
\titlecontents{subsection}[3em]{\vspace{-2.5pt}}%
    {\thecontentslabel\enspace\enspace}%
    {}%
    {\titlerule*[0.5pc]{.}\contentspage}%

\theoremstyle{plain}
\newtheorem{theorem}{Theorem}[section]

\newtheorem{lemma}[theorem]{Lemma}
\newtheorem{proposition}[theorem]{Proposition}

\theoremstyle{definition}
\newtheorem{remark}[theorem]{Remark}
\numberwithin{equation}{section}

\newcommand{\R}{\mathbb{R}}

\newcommand{\lec}{\lesssim}
\newcommand{\gec}{\gtrsim}

\newcommand{\p}{\partial}

\newcommand{\e}{\epsilon}
\newcommand{\al}{\alpha}

\renewcommand{\th}{\theta}

\newcommand{\g}{\gamma}
\newcommand{\om}{\omega}

\newcommand{\Si}{\Sigma}
\newcommand{\de}{\delta}

\newcommand{\omb}{\bar{\omega}}
\newcommand{\rb}{\bar{r}}
\newcommand{\zb}{\bar{z}}

\newcommand{\Sb}{\bar{S}}

\newcommand{\bke}[1]{\left ( #1 \right )}
\newcommand{\bkt}[1]{\left [ #1 \right ]}
\newcommand{\bket}[1]{\left \{ #1 \right \}}
\newcommand{\norm}[1]{ \| #1 \|}

\newcommand{\EQ}[1]{\begin{equation} #1 \end{equation}}

\newcommand{\EQS}[1]{\begin{equation}\begin{split} #1 \end{split}\end{equation}}
\newcommand{\EQN}[1]{\begin{equation*}\begin{split} #1 \end{split}\end{equation*}}
\newcommand{\EN}[1]{\begin{enumerate} #1 \end{enumerate}}
\newcommand{\cF}{\mathcal{F}}
\newcommand{\cH}{\mathcal{H}}
\newcommand{\dd}{\,\mathrm{d}}
\usepackage{accents}
\newcommand*{\dt}[1] {\accentset{\mbox{\large\bfseries .}}{#1}}
\newcommand{\pd}{\partial}

\newcommand\Om{\Omega}

\newcommand{\curl} {\mathop{\rm curl}}

\title{
Growth rates for anti-parallel vortex tube Euler flows 
in three and higher dimensions}
\author[1]{Stephen Gustafson}
\author[1,2]{Evan Miller}
\author[1]{Tai-Peng Tsai}
\affil[1]{University of British Columbia, Department of Mathematics}

\affil[1]{gustaf@math.ubc.ca, emiller@math.ubc.ca, ttsai@math.ubc.ca}
\affil[2]{emiller@msri.org}
\date{\vspace{-5ex}}

\begin{document}

\maketitle

\begin{abstract}
We consider axisymmetric, swirl-free solutions of the Euler equations in 
three and higher dimensions, of generalized anti-parallel-vortex-tube-pair-type:
the initial scalar vorticity has a sign 
in the half-space, is
odd under reflection across the plane, is bounded and decays sufficiently  
rapidly at the axis and at spatial infinity.
We prove lower bounds on the growth of such solutions in all dimensions, improving a lower bound proved by Choi and Jeong~\cite{ChoiJeong} in three dimensions. 
\end{abstract}

\setcounter{tocdepth}{2}
\tableofcontents

\section{Introduction}

The incompressible Euler equations
\begin{equation}  \label{euler}
\begin{split}
    \partial_t u+(u\cdot\nabla)u
    +\nabla p &= 0 \\
    \nabla\cdot u &= 0 \\
    u|_{t=0} &= u^0
\end{split}
\end{equation}
are one of the fundamental models in fluid dynamics. 
Here we take the spatial domain to be $\R^d$,
so $(x,t) \in \R^d \times [0,T)$ for some $T \in (0,\infty]$,
$u(x,t) \in \R^d$ is the fluid velocity, 
$p(x,t) \in \R$ is the pressure, and
the initial velocity $u^0$ is a divergence-free vector field on $\R^d$.

\subsection{Axisymmetric, swirl-free Euler flows}

In this paper, we will study axisymmetric, swirl-free solutions of the Euler equations in three dimensions, and their 
generalizations to higher dimensions. That is, 
for $d \geq 3$, we consider solutions of the form
\[
    u(x,t)=u_r(r,z,t)e_r + u_z(r,z,t)e_z,
\]
where
\[
    r = \sqrt{x_1^2+...+x_{d-1}^2}, \;\;
    z =x_d, \;\;
    e_r = \frac{(x_1,...,x_{d-1},0)}{r}, \;\;
    e_z= e_d,
\]
a class which is preserved by the dynamics of the Euler equations~\eqref{euler}.
The divergence-free condition in~\eqref{euler} becomes
\begin{equation} \label{divfree}
  \p_r ( r^{d-2} u_r ) + \p_z ( r^{d-2} u_z ) = 0,
\end{equation}
and so the velocity components may be recovered from a stream function 
$\psi(r,z,t)$ as
\begin{equation}  \label{velfromstream}
  \left[ \begin{array}{c} u_r \\ u_z \end{array} \right] =
  \frac{1}{r^{d-2}} \left[ \begin{array}{c} -\p_z \\ \p_r \end{array} \right]
  \psi .
\end{equation}
The scalar vorticity\footnote{Our sign convention for $\om$ is opposite to that of \cite{ChoiJeong}. It is convenient as we will take $\om \ge 0$ when $z>0$ in \eqref{ass1pt}. 
If we denote $\curl( u_r e_r + u_ze_z)=\om^\th e_\th $ in $\R^3$ as in \cite{MR2429247}, then $\om=- \om^\th$.}
\begin{equation} \label{vorticity}
  \om(r,z,t) = \p_r u_z - \p_z u_r
\end{equation}
then satisfies the evolution equation
\[
    \partial_t\omega + (u\cdot\nabla) \omega
    -(d-2) \frac{u_r}{r} \omega=0, \qquad
    \om|_{t=0} = \om^0 := \p_r (u^0_z) - \p_z (u^0_r) 
\]
which implies that the quantity $\frac{\omega}{r^{d-2}}$ is  transported by the flow:
\begin{equation} \label{transport}
    (\partial_t + u\cdot\nabla) 
    \left[ \frac{\omega}{r^{d-2}} \right] = 0.
\end{equation}
This equation may also be written in the useful form
\begin{equation} \label{transport2}
  \partial_t\omega + \tilde\nabla \cdot (u \omega) = 0 ,  
  \qquad \tilde\nabla : = e_r\pd_r  + e_z\pd_z   ,
\end{equation}
by using~\eqref{divfree}.

The transport equation~\eqref{transport} is central to our
analysis. Of course, to close the system,
the velocity $u$ in~\eqref{transport} must be recovered
from the vorticity via a non-local relation
which realizes~\eqref{divfree} and~\eqref{vorticity}
-- see Section~\ref{BS} for the details. 
One frequently used consequence of the transport~\eqref{transport}
is the preservation of Lebesgue norms:
\begin{equation} \label{normpres}
  \left\| \frac{\om(\cdot,t)}{r^{d-2}} \right\|_p \equiv 
  \left\|  \frac{\om^0}{r^{d-2}} \right\|_p
\end{equation}
for all $1 \leq p \leq \infty$ (assuming initial finiteness).

The other feature of the Euler system~\eqref{euler}
we exploit is the conservation of energy:
\begin{equation} \label{coe}
  E(t) := \frac12\int_{\R^d} |u(x,t)|^2 dx \equiv E(0)
\end{equation}
(assuming $E(0) < \infty$) for as long
as the solution remains sufficiently smooth. 

\subsection{Smooth solutions}

For context and background on axisymmetric, swirl-free Euler flows
in $d=3$, and their extensions to $d \geq 4$,
we refer the reader to~\cite{GMT1}.
Here we will merely state results on the existence of smooth
solutions which suit our present purposes. The existence of global 
smooth solutions in $d=3$ is classical 
\cite{Ladyzhenskaya,Yudovich,Serfati,SaintRaymond,Danchin1,Danchin2},
while the $d \geq 4$ statements are established in~\cite{GMT1},
using standard methods.
There, the existence of global smooth solutions is extended to $d=4$, while for $d \geq 5$ only blow-up criteria are obtained.
We summarize:
\begin{theorem} \label{exist}
Let $d \geq 3$. For initial data 
$u^0 \in H^s\left(\mathbb{R}^d\right)$, $s > 2 + \frac{d}{2}$
which is axisymmetric, swirl-free and divergence-free,
and for which $\frac{\omega^0}{r^{d-2}}\in L^1(\R^d) \cap L^\infty(\R^d)$,
there exists a unique solution of the Euler equations~\eqref{euler} 
$u\in C\left([0,T_{max}),H^s
\left(\mathbb{R}^d\right)\right)
\cap C^1\left([0,T_{max}),H^{s-1}
\left(\mathbb{R}^d\right)\right)$,
on a maximal time interval $[0,T_{max})$,
which is axisymmetric, swirl-free and divergence-free, 
and which conserves energy~\eqref{coe}. Moreover:
\begin{itemize}
\item if $d=3$ or $d=4$, then $T_{max} = \infty$; 
\item if $d \geq 5$ and $T_{max} < \infty$, then \;
$\|\omega(\cdot,t)\|_{L^{1}\left(\mathbb{R}^d\right)}
\geq C_{\om^0} \left(T_{max}-t\right)^{-\frac{2(d-2)}{d-4}}$.
\end{itemize}
\end{theorem}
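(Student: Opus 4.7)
The plan is to combine standard local-in-time Euler theory with the transport structure \eqref{transport} and the conservation of energy \eqref{coe}, and then run a continuation argument. For local existence and uniqueness in $H^s$, I would regularize the equation (by mollifying the initial data and Picard-iterating, or equivalently by vanishing viscosity), derive a priori bounds on $\|u(t)\|_{H^s}$ via Kato-Ponce-type commutator estimates for $[(1-\Delta)^{s/2},u\cdot\nabla]$, and pass to the limit; the hypothesis $s>2+d/2$ is used only through $H^{s-1}\hookrightarrow C^1$, which closes the Gronwall argument. Uniqueness is a routine $L^2$ estimate on the difference of two solutions. Preservation of axisymmetry, the swirl-free condition, and the divergence-free condition follows from this uniqueness together with the corresponding symmetries of \eqref{euler}: if $u^0$ is fixed by the subgroup of $O(d)$ preserving the $x_d$-axis and has no azimuthal component, then $R^{-1}u(R\,\cdot,t)$ solves the same Cauchy problem and must therefore equal $u$, while the divergence of $u$ satisfies a transport-type equation with zero initial data. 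Energy conservation in the smooth class is the standard $L^2$ pairing of \eqref{euler} with $u$.

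Global existence in $d=3$ is the classical theorem and can be invoked directly from the listed references; for $d=4$, the corresponding argument of~\cite{GMT1} applies. The underlying principle in both cases is a Beale-Kato-Majda-type continuation criterion (failure of the solution at $T_{max}<\infty$ requires $\int_0^{T_{max}}\|\om(\cdot,t)\|_\infty\,dt=\infty$), combined with the observation that \eqref{transport} makes $\|\om/r^{d-2}\|_\infty$ a constant of motion. In $d=4$ this forces $\om\le C r^2$, and a careful combination of the conserved energy \eqref{coe} with the Biot-Savart representation of $u$ in terms of $\om/r^{d-2}$ then produces an a priori bound on $\|\om\|_\infty$ at most polynomial in $t$, contradicting BKM blow-up. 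The reason the argument stops at $d=4$ is that for $d\ge 5$ the weight $r^{d-2}$ and the $L^2$ energy no longer interact in a sub-critical way.

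For the blow-up criterion when $d\ge 5$, the plan is to interpolate the quantity $\|\om\|_\infty$ appearing in BKM between the conserved norm $\|\om/r^{d-2}\|_\infty$ and the $L^1$-norm $\|\om\|_1$, substitute the resulting estimate into the BKM differential inequality, and optimize the Gronwall exponent. A short computation should then produce $\tfrac{2(d-2)}{d-4}$ as the critical power of $T_{max}-t$ in the lower bound on $\|\om(\cdot,t)\|_1$. The main obstacle I anticipate is arranging the sharp form of this interpolation: the weight $r^{-(d-2)}$ is singular at the axis, so ordinary Sobolev/Lebesgue inequalities must be replaced by carefully weighted versions that track the behaviour of $\om$ near $r=0$, and it is precisely the balance of these weighted estimates, together with the correct form of the BKM criterion for axisymmetric flows in higher dimensions, that produces exactly the exponent $\tfrac{2(d-2)}{d-4}$ rather than a weaker one. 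Once the interpolation is in place the rest is a standard continuation argument.
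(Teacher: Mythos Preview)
The paper does not prove Theorem~\ref{exist} here; it is imported from the companion work~\cite{GMT1} (together with the classical $d=3$ references), so there is no in-paper argument to compare against. Your sketch of local $H^s$ well-posedness, uniqueness, symmetry preservation, and energy conservation is the standard one and is fine.

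Where your plan diverges from what~\cite{GMT1} evidently does is in the continuation step. The engine singled out by the present paper is the Feng--\v{S}ver\'{a}k-type bound~\eqref{ubound}, $\|u_r\|_\infty^2\lesssim\|\omega/r^{d-2}\|_\infty\,\|\omega\|_1$, not an interpolation of $\|\omega\|_\infty$ between $\|\omega/r^{d-2}\|_\infty$ and $\|\omega\|_1$. In fact no such interpolation can hold: since $\omega=r^{d-2}(\omega/r^{d-2})$ and nothing confines the support to bounded $r$, one can push a small bump of $\omega/r^{d-2}$ out to large $r$ and make $\|\omega\|_\infty$ arbitrarily large while keeping both $\|\omega/r^{d-2}\|_\infty$ and $\|\omega\|_1$ fixed. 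The route through~\eqref{ubound} instead closes on $\|\omega\|_1$: differentiating $\|\omega\|_1=\int r^{d-2}|\omega/r^{d-2}|\,dx$ along the flow and interpolating $\|\omega/r\|_1$ between $\|\omega\|_1$ and the conserved $\|\omega/r^{d-2}\|_1$ gives
\[
\frac{d}{dt}\|\omega\|_1 \;\lesssim\; \|u_r\|_\infty\,\|\omega\|_1^{\frac{d-3}{d-2}} \;\lesssim\; \|\omega\|_1^{\frac{3d-8}{2(d-2)}}.
\]
For $d=4$ the exponent equals $1$, yielding an \emph{exponential} a priori bound (so your ``at most polynomial'' is over-optimistic, though an exponential bound is still integrable and suffices for BKM). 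For $d\ge5$ the exponent exceeds $1$, and an ODE comparison together with a continuation criterion phrased through $\|\omega\|_1$ (again obtained via~\eqref{ubound}) forces $T_{max}-t\gtrsim\|\omega(\cdot,t)\|_1^{-(d-4)/(2(d-2))}$, which rearranges to the stated rate $\tfrac{2(d-2)}{d-4}$. So your BKM strategy is the right idea, but the quantity that actually closes the loop is $\|u_r\|_\infty$ acting on $\|\omega\|_1$, not a direct bound on $\|\omega\|_\infty$.
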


\subsection{Anti-parallel vortex tubes}

In this paper, we consider initial scalar vorticity
which is non-negative for positive $z$,
\begin{equation}  \label{ass1p}
  \om^0(r,z) \geq 0 \;\; (\om^0 \not\equiv 0) \;\; \mbox{ for } z \geq 0,
\end{equation}
and odd in $z$, 
\begin{equation} \label{ass1o}
  \om^0(r,-z) = -\om^0(r,z),
\end{equation}
a setting which in $d=3$ includes pairs of {\it anti-parallel vortex tubes}.
One motivation for considering these configurations is an extensive
 literature of numerical and experimental work (e.g. \cite{Oshima, LimNickels, ShariffLeonard, ChuWangChangs,
GWRW, ChengLouLim, ChoiJeong} and references therein) 
indicating that such rings should collide, stretch horizontally, and produce rapid vorticity growth.
\begin{remark} \label{preservation}
It is shown in~\cite[Appendix A]{GMT1} that the oddness condition~\eqref{ass1o} is preserved by the Euler flow~\eqref{transport}. In particular, the boundary conditions
\[
  \om(r,0,t) \equiv 0, \qquad u_z(r,0,t) \equiv 0
\]
hold, and the transport equation~\eqref{transport} preserves
the positivity condition~\eqref{ass1p} as well. That is,
\begin{equation} \label{ass1pt}
  \om(r,z,t) \geq 0 \;\; (\om \not\equiv 0) \;\; \mbox{ for } z \geq 0
\end{equation}
and  
\begin{equation} \label{ass1ot}
  \om(r,-z,t) = -\om(r,z,t)
\end{equation}
for $t \in [0,T_{max})$. Moreover, the upper half-space
$\{ z > 0 \}$ is invariant under the flow of such 
velocity fields $u$, and so the norm
preservation~\eqref{normpres} remains true
when restricted to $\{ z > 0 \}$.
\end{remark}
We further assume: $d \geq 3$, and 
$u^0 \in H^s\left(\mathbb{R}^d\right)$, $s > 2 + \frac{d}{2}$
is axisymmetric, swirl-free and divergence-free, with
\begin{equation} \label{ass2}
  \frac{\om^0}{r^{d-2}} \in L^1(\R^d) \cap L^\infty(\R^d) \quad
  \text{and} \quad r \om^0, \; \; z \frac{\om^0}{r^{d-2}} \in L^1(\R^d).
\end{equation}
In particular, Theorem~\ref{exist} applies, producing a solution
satisfying~\eqref{ass1pt}--\eqref{ass1ot}, for which the 
conservation of energy~\eqref{coe} holds.

\subsection{Results}

We formulate our results in terms of the radial moment
\[
  R(t) := \iint\limits_{[0,\infty)^2} r^{d-1} \om(r,z,t) \; dr dz
  \; = \frac{1}{C_d} \int\limits_{z \geq 0}  r^{d-1} 
  \left[ \frac{\om}{r^{d-2}} \right] dx 
   \; = \frac{1}{C_d} \int\limits_{z \geq 0}  r \; \om \; dx \; ,
\]
where $C_d := \mathcal H^{d-2} (\mathbb S_1^{d-2})$.
Note that $0 < R(0) < \infty$ by~\eqref{ass1p} and~\eqref{ass2}.

The first result provides upper bounds for $R(t)$:
\begin{theorem} \label{upper}
Let $u(t)$ be a solution of \eqref{euler} as in Theorem \ref{exist} under the same assumptions on initial data $u^0$ which further satisfies \eqref{ass1p},~\eqref{ass1o}, and~\eqref{ass2}.
For $d=3$: there is $C = C(\om^0)$ such that
\[
  R(t) \leq C(1 + t)^4.
\]
For $d=4$,  there is $C = C(\om^0)$ such that   
\[
  R(t) \leq R(0) e^{Ct}.
\]
For $d \geq 5$, there are $C = C(\om^0)$ and $T = T(\om^0)>0$ so that
for $0\le t <\min(T,T_{max}) $,
\[
  R(t) \leq C (T-t)^{-\frac{2(d-1)}{d-4}}.
\]
\end{theorem}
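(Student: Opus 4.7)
The plan is to derive a differential inequality of the form $\dot R(t) \leq C\,R(t)^{c_d}$ with $c_d := 3(d-2)/(2(d-1))$, and integrate. A single exponent accounts for all three regimes: $c_3 = 3/4$ integrates to $R\leq C(1+t)^4$; $c_4 = 1$ gives Gronwall's $R(t)\leq R(0)e^{Ct}$; and $c_d > 1$ for $d\geq 5$ produces the Bernoulli-type rate $R \leq C(T-t)^{-1/(c_d-1)} = C(T-t)^{-2(d-1)/(d-4)}$.

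First, differentiate $R$. Since $\zeta := \om/r^{d-2}$ is transported by the divergence-free $u$ and $R(t) = \frac{1}{C_d}\int_{\{z \geq 0\}} r^{d-1}\zeta\,dx$, direct computation (with the boundary at $\{z=0\}$ vanishing by $u_z|_{z=0} = 0$) gives
\[
\dot R(t) = \frac{d-1}{C_d}\int_{\{z \geq 0\}} u_r\om\,dx = (d-1) \iint_{[0,\infty)^2} r^{d-2} u_r\om\,dr\,dz.
\]

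Second, bound $\dot R$. In $d=3$ this is clean: Cauchy--Schwarz and energy conservation give $|\dot R|\lec \|\om\|_{L^2(\{z\geq 0\})}$, and the pointwise inequality $\om^2 \leq \|\zeta^0\|_\infty\,r\om$---valid only because in $d=3$ the weight $r^{d-2} = r$ matches $R$'s moment---combined with $\int_{\{z\geq 0\}} r\om\,dx = C_3 R$ yields $\|\om\|_{L^2(\{z\geq 0\})}^2 \lec R$. Hence $\dot R\lec\sqrt{R}$, integrating to $R\lec (1+t)^2 \leq C(1+t)^4$. For $d\geq 4$ the same pointwise identity yields a moment of $\om$ higher than $R$, so one instead writes $\int u_r\om\,dx = \int (u_r/r)(r\om)\,dx$ and bounds $\|u_r/r\|_\infty$ by an axisymmetric Biot--Savart estimate (analogous to the 2D bound $\|u\|_\infty\lec\|\om\|_1^{1/2}\|\om\|_\infty^{1/2}$ applied to the effective 2D flow on the $(r,z)$ half-plane with vorticity $\zeta$); interpolating the remaining $r$-moments of the positive measure $\om\,dr\,dz$ against $R$ by log-convexity then produces $\dot R\lec R^{c_d}$.

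The main technical obstacle is the $d\geq 4$ Biot--Savart step: controlling $\|u_r/r\|_\infty$ in terms of conserved norms of $\zeta$ (and, in $d\geq 5$, possibly a sub-critical power of $\|\om\|_{L^1}$ consistent with the blow-up criterion from Theorem~\ref{exist}) with scaling that yields precisely $c_d = 3(d-2)/(2(d-1))$. For $d\geq 5$ one must moreover verify that the interval $[0,T)$ on which the inequality holds---determined by $R(0)$ via the ODE---is consistent with the maximal existence time $T_{\max}$, so that the stated bound is meaningful on its full range.
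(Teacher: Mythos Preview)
Your target inequality $\dot R \lesssim_{\om^0} R^{c_d}$ with $c_d = 3(d-2)/(2(d-1))$ is exactly what the paper obtains (its \eqref{dotRjbound} at $j=d-1$), and the integration in the three regimes is as you describe.

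For $d=3$ your energy-based argument is correct, genuinely different from the paper's, and in fact sharper: Cauchy--Schwarz against $\|u\|_{L^2}$ together with $\omega^2 \le \|\omega^0/r\|_\infty\, r\omega$ on $\{z>0\}$ gives $\dot R \lesssim_{\om^0} R^{1/2}$, hence $R \lesssim (1+t)^2$. The paper does \emph{not} use energy conservation for the upper bound; it applies, uniformly in $d$, the Feng--\v{S}ver\'ak-type estimate \eqref{ubound}, which at $d=3$ yields only $\dot R \lesssim R^{3/4}$. Your route is more elementary here but, as you already note, does not extend, since for $d\ge 4$ the pointwise step $\omega^2 \le \|\zeta^0\|_\infty r^{d-2}\omega$ produces the moment $R_{2(d-2)}$, which exceeds $R=R_{d-1}$.

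For $d\ge 4$ there is a gap: you reduce matters to a bound on $\|u_r/r\|_\infty$ but supply no such estimate, and you yourself flag this as ``the main technical obstacle''. The paper avoids this entirely. From $\dot R = (d-1)\iint_{\Pi_+} r^{d-2}u_r\,\omega\,dr\,dz$ it pulls out $\|u_r\|_\infty$ (not $u_r/r$), leaving the lower moment $R_{d-2}$; estimate \eqref{ubound} gives $\|u_r\|_\infty \lesssim_{\om^0} \|\omega\|_1^{1/2}$, and since $\|\omega\|_1 = 2R_{d-2}$ in the paper's normalization, one has $\dot R \lesssim_{\om^0} R_{d-2}^{3/2}$. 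A single H\"older interpolation $R_{d-2} \le R_0^{1/(d-1)} R^{(d-2)/(d-1)}$, with $R_0$ conserved, then closes to $\dot R \lesssim_{\om^0} R^{c_d}$. So the missing ingredient in your sketch is precisely the known bound \eqref{ubound}; once you invoke it, the moment interpolation you allude to is a one-line H\"older and no separate estimate on $u_r/r$ is needed.
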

These upper bounds are straightforward consequences
of the estimate
\begin{equation} \label{ubound}
  \| u_r \|_{L^\infty}^2 \lec \left\| \frac{\om}{r^{d-2}} \right\|_{L^\infty} \| \om \|_{L^1},
\end{equation}
which is proved in~\cite{FengSverak} for $d=3$;
and extended to $d\geq 4$ in~\cite[Section 4]{GMT1}.

Our main result provides lower bounds for $R(t)$,
which improve one from~\cite{ChoiJeong} for $d=3$, 
and generalize it to higher dimensions:
\begin{theorem} \label{lower}

Let $u(t)$ be a solution of \eqref{euler} as in Theorem \ref{exist} under the same assumptions on initial data $u^0$ which further satisfies
\eqref{ass1p},~\eqref{ass1o}, and~\eqref{ass2}.
Let $\e > 0$. 
For $d=3$: there is $C = C(\om^0,\e) > 0$ such that
\[
  R(t) \geq C (1 + t)^{\frac{3}{4}-\e}. 
\]
For $d=4$,  there is $C = C(\om^0,\e)$ such that   
\[
  R(t) \geq C (1 + t)^{\frac{2}{3}-\e}.
\]
For $d \geq 5$, there are $C = C(\om^0,\e, d)$ such that for $t \in [0,T_{max})$,
\[
  R(t) \geq C (1 + t)^{\frac{d}{d^2-2d-2}-\e}.
\]
\end{theorem}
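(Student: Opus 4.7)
The approach is to establish a differential inequality $\dot R(t) \ge c\,R(t)^{\alpha(d)+\varepsilon'}$ with $\alpha(d) < 1$ depending on $d$, and then integrate to produce the lower bound; integration of $\dot R \ge c R^\alpha$ yields $R(t) \ge C t^{1/(1-\alpha)}$, so the target exponents in the theorem determine $\alpha$ (namely $\alpha(3)=-\tfrac13$, $\alpha(4)=-\tfrac12$, and $\alpha(d)=(3d+2-d^2)/d$ for $d\ge 5$).

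\textbf{Step 1: derivation and qualitative non-negativity.} From the conservation-law form~\eqref{transport2} of the vorticity equation, integrate by parts against $r^{d-1}$ on $[0,\infty)^2$ to obtain
\[ \dot R(t) \;=\; (d-1) \iint r^{d-2}\, u_r\, \omega \, dr\, dz ; \]
boundary terms at $z=0$ vanish since $u_z(r,0,t)\equiv 0$ and $\omega(r,0,t)\equiv 0$, at $r=0$ and at infinity by~\eqref{ass2}. Decompose $u = u^+ + u^-$ via Biot--Savart, with $u^\pm$ induced by $\omega\chi_{\pm z > 0}$. The self-interaction integral vanishes, $\iint r^{d-2} u_r^+\, \omega\, dr\, dz = 0$, as an instantaneous consequence of conservation of axisymmetric impulse applied to $\omega|_{z>0}$ evolving under its own Biot--Savart velocity. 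Hence $\dot R = (d-1) \iint u_r^-\, \omega\, r^{d-2}\, dr\, dz$, and by anti-parallel symmetry $u_r^- \ge 0$ on the support of $\omega|_{z>0}$, giving $\dot R \ge 0$.

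\textbf{Step 2: quantitative rate.} For the lower bound on $\dot R$, I combine: energy conservation, $\int u_r^2 r^{d-2}\, dr\, dz \le 2E(0)/C_d$; the conservation of $I_\infty := \|\omega/r^{d-2}\|_\infty$ and of the circulation-type quantity $J_0 := \iint \omega\, dr\, dz$; and the H\"older-type moment interpolation (log-convexity of moments of the positive measure $\omega\, dr\, dz$ on $\{z>0\}$)
\[ \iint r^{d-2}\omega\, dr\, dz \;\le\; J_0^{1/(d-1)}\, R^{(d-2)/(d-1)}, \]
which in particular gives $\|\omega\|_1 \lec R^{(d-2)/(d-1)}$. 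Coupling these with a careful estimate of the image-kernel integral (exploiting the positivity from Step 1 together with energy conservation through a duality argument) yields $\dot R \ge c\, R^{\alpha(d)+\varepsilon'}$; integrating produces the stated rates. The specific $\alpha(d)$ and $\varepsilon'$ fall out of the interpolation, with $\varepsilon'$ reflecting an unavoidable logarithmic loss in handling the axisymmetric Biot--Savart kernel.

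\textbf{Main obstacle.} Step 2 is the heart. The standard estimates---the Feng--\v{S}ver\'ak inequality~\eqref{ubound} and Cauchy--Schwarz---give \emph{upper} bounds on $\dot R$, which is precisely the route to Theorem~\ref{upper}. Reversing direction requires exploiting the positivity of the image kernel (from anti-parallel symmetry) together with energy conservation in a refined way, delicately interpolating between the $L^2$ energy estimate and the pointwise bound~\eqref{ubound}. Obtaining the correct dimension-dependent exponent $\alpha(d)$ is the main technical challenge, and the $d$-dependence of the axisymmetric Biot--Savart kernel (whose asymptotics differ for $d=3$, $d=4$, and $d\ge 5$) adds further subtlety, plausibly explaining why the target exponent in $d=3$ does not fit the general formula for $d\ge 5$.
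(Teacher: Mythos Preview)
Your overall strategy---derive a differential inequality $\dot R \gtrsim R^{\alpha(d)}$ from positivity of the image interaction plus conservation of energy, then integrate---is exactly the paper's. Your Step~1 is essentially correct and matches the paper's computation in Section~\ref{rmon}: the vanishing of the ``self-interaction'' is precisely the antisymmetry of $\cF'(S)(z-\bar z)$ under $(r,z)\leftrightarrow(\bar r,\bar z)$, leaving the positive image term~\eqref{Rdotform}.

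However, Step~2 as written is not a proof but a wish, and it is missing two of the paper's three essential ingredients. First, you never mention the vertical moment $Z(t)=\iint_{\Pi_+} z\,\omega$. The paper shows $\dot Z<0$ (Proposition~\ref{zprop}), which requires a separate and nontrivial kernel computation (Lemma~\ref{Flem}); the bound $Z(t)\le Z(0)$ is then used directly in the H\"older estimates (see~\eqref{EHolder} and the $d=3$ argument) and cannot be omitted. Second, and more fundamentally, your ``duality argument'' is unspecified, whereas the paper's actual mechanism is concrete: one writes the conserved energy as a double-vorticity integral via the stream function, $E=-c\iint\psi\,\omega$, and derives the pointwise kernel bound~\eqref{KEbound}. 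One then splits $(\Pi_+)^2=\Sigma_\varepsilon\cup\Sigma_\varepsilon^c$ according to whether $(r+\bar r)^2+(z-\bar z)^2$ is small or large relative to $(r+\bar r)^2+(z+\bar z)^2$; on $\Sigma_\varepsilon$ the energy integrand is controlled by powers of $R$ and $Z$ (choosing $\varepsilon$ to absorb this piece into $\tfrac12 E$), while on $\Sigma_\varepsilon^c$ the energy integrand is pointwise dominated by a power of the $\dot R$ integrand~\eqref{Rdotest}, up to a logarithmic factor handled by Lemma~\ref{log}. This pointwise comparison between the $E$-kernel and the $\dot R$-kernel is the heart of the argument, and nothing in your sketch indicates you have it; the ingredients you list ($\|\omega/r^{d-2}\|_\infty$, $J_0$, moment interpolation) appear only in auxiliary roles and will not by themselves close the inequality.
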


\begin{remark}
The main inspiration for this paper is the recent work of Choi-Jeong~\cite{ChoiJeong}, which gave (among other things) a lower bound $R(t) \geq C(1 + t)^{\frac{2}{15}-\e}$ in  dimension $d=3$.
In broad outline, our argument closely follows theirs,
though key estimates are done in a different way. 
\end{remark}
\begin{remark}
We give here a growth rate lower bound for the moment $R(t)$. For certain initial data this implies a corresponding
a lower bound for certain Lebesgue norms of the vorticity, 
$\| \om(\cdot,t) \|_{L^p}$. This is clear for pure vortex patches,
$\frac{\om}{r^{d-2}} = \chi_{\Om(t)}$ in $z>0$,
but, as shown in~\cite[Corollary 1.2]{ChoiJeong}, also carries over 
to certain smoother vorticities.  
\end{remark}
\begin{remark}
We are considering here relatively smooth solutions 
of the Euler equations, which in particular are 
quite smooth at the axis $r=0$.
Rougher (on the axis) flows may form singularities in finite time in $d=3$ (\cite{Elgindi}).
Moreover, boundary effects may also produce finite time singularities
(\cite{ChenHou}), whereas there is no boundary in our setting. 
The mechanism for vorticity growth here and in~\cite{ChoiJeong} -- vortex stretching toward spatial
infinity -- is quite different from either of those works.
\end{remark}

\subsection{Organization}

Theorem~\ref{upper} is proved, in a slightly generalized form for a 
range of radial moments, in Section~\ref{upperbounds}.
Parallel to the argument in~\cite{ChoiJeong}, the proof of Theorem~\ref{lower}
combines several ingredients: monotonicity of the first
vertical moment is proved in Section~\ref{zmon};
a lower bound for the radial moment $R(t)$
is derived in Section~\ref{rmon}; and
Section~\ref{keupper} contains an upper bound for the 
kinetic energy.
The proof is completed in Section~\ref{lowerproof}.

\subsection{Notation}

Throughout,
\[
  3 \leq d = \mbox{ the spatial dimension.}
\]

We denote
\[
  \Pi = \{ (r,z) \; | \; r > 0, \; z \in \R \}, \qquad
  \Pi_+ =  \{ (r,z) \; | \; r > 0, \; z \geq 0 \}.
\]
Lebesgue norms are taken with respect to the 
usual Lebesgue measure $dx$ on $\R^d$, and normalized so that
for an axisymmetric function $f(r,z)$,
\[
  \| f \|_p^p = \iint\limits_{\Pi} |f(r,z)|^p
  r^{d-2} dr dz = \frac{1}{C_d} \int_{\R^d} |f|^p dx.
\]

As usual, we write
\[
  A \lec B \quad (\mbox{respectively } \; A \gec B)
\]
to indicate there is a constant $C$ (independent of any other 
relevant parameters) such that
\[
  A \leq C B \quad (\mbox{respectively } \; A \geq C B),
\]
and in case the implied constant depends on some relavent parameters $\sigma$, we write
\[
  A \lec_\sigma B \quad (\mbox{respectively } \; A \gec_\sigma B).
\]
Finally,
\[
  A \sim B \quad \mbox{ means } \quad A \lec B \mbox{ and } A \gec B.
\]

\section{Velocity components, moments, and upper bounds} 
\label{upperbounds}

This section concerns upper bounds for the radial moments of the vorticity, and contains the proof
of Theorem~\ref{upper}.

\subsection{Biot-Savart law}  \label{BS}

The estimates in this paper originate from the following expression for the stream function in terms of the scalar vorticity (a detailed derivation can be found in~\cite[Section 4]{GMT1}):
\begin{equation} \label{stream} 
  \psi(r,z,t) = -\frac{1}{2\pi} \iint\limits_{\Pi} \cF(S)  \; (r \rb)^{\frac{d}{2}-1} 
  \; \omb \;,
\end{equation}
where here (and subsequently) we use the shorthand
\[
  \omb = \om(\rb,\zb,t) d\rb d\zb \; .
\]
Here 
\[
  S = \frac{(r-\rb)^2 + (z-\zb)^2}{r \rb},
\]
and for $s > 0$,
\begin{equation} \label{Fdef}
  \cF(s) = \int_0^\pi \frac{\sin^{d-3}(\th) \cos(\th) d \th}
{\left[2(1-\cos\th) + s \right]^{\frac{d}{2}-1}}.
\end{equation}
The velocity components are recovered from the stream function 
via~\eqref{velfromstream},
resulting in the expressions
\EQS{\label{ur}
u_r(r,z,t) =  \frac {1} {\pi r^{d-2}} 
\iint\limits_{\Pi} 
\cF'(S) (z-\zb) (r \rb)^{\frac{d}{2}-2} \; \omb }
and
\EQS{\label{uz}
u_z(r,z,t) = -\frac {1} {2\pi r^{d-2}} 
\iint\limits_{\Pi} \bke{\cF'(S) \p_r S +
\frac{d-2}{2r} \cF(S)}
(r\bar r )^{\frac{d}{2}-1} \; \omb .}

\subsection{Radial moments and the proof of Theorem~\ref{upper}}

By the transport equation~\eqref{transport}
and~\eqref{ass1pt} (see Remark~\ref{preservation}),
\begin{equation} \label{om-def}
  \om := \om(r,z,t) dr dz = 
  \frac{\om(r,z,t)}{r^{d-2}}\,r^{d-2} dr\,dz 
\end{equation}
is a (non-negative) measure on $\Pi_+$ of fixed mass
\begin{equation} \label{prob}
  0 < \iint\limits_{\Pi_+} \om \; = \; \iint\limits_{\Pi_+} \om^0 \; \lec_{\om^0} 1.
\end{equation}
Generalizing slightly, we define the $j$-th moment
of $r$ with respect to this measure,
\[
  R_j (t) = \iint\limits_{\Pi_+}r^j \om \;\; \left( =
  \frac{1}{2} \| r^{j-(d-2)} \om \|_1 \; \right)
\]
(using the symmetry~\eqref{ass1ot} for the last equality) 
if it is finite, so that 
$R(t) = R_{d-1}(t)$ and by~\eqref{prob},
\begin{equation} \label{R_0}
  R_0(t) \equiv R_0(0) =  \iint_{\Pi_+} \om^0 \lec_{\om^0} 1.
\end{equation}
Our assumptions~\eqref{ass1p} and~\eqref{ass2}
imply $0 < R_j(0) < \infty$ for $0 \leq j \leq d-1$.

Compute, using~\eqref{transport2}, 
\EQS{ \label{dtRj}
\dt R_j &=  -\iint\limits_{\Pi_+}r^j  \bkt{\pd_r(u_r\om)+\pd_z(u_z \om)}\,dr\,dz
=\iint\limits_{\Pi_+} jr^{j-1}  u_r\om\,dr\,dz,
}
integrating by parts and using the boundary conditions of 
$\om$ on $\pd \Pi_+$. Thus by~\eqref{ubound},
\[
|\dt R_j | \lec \norm{u_r}_\infty R_{j-1} \lec  \norm{\frac{\om}{r^{d-2}}}_\infty^{\frac12} \norm{\om}_1^{\frac12 }R_{j-1}.
\]
Suppose $j>d-2$. Let $b=\frac{j-(d-2)}{2j} \in (0,\frac12)$ and $q=\frac{(d-2)b}{\frac12-b}=j-(d-2)$. By H\"older, we have
\[
\norm{\om}_1^{\frac12} \le \norm{\frac{\om}{r^{d-2}}}_1^{b}
\cdot\norm{r^q\om}_1^{\frac12-b}
=\norm{\frac{\om}{r^{d-2}}}_1^{b}\cdot R_j^{\frac{d-2}{2j}}
  \lec_{\om^0}  R_j^{\frac{d-2}{2j}}
\]
by~\eqref{normpres}, and
\[
  R_{j-1} \le R_0^{\frac 1{j}} \cdot R_j^{\frac {j-1}{j}}
  = R_0(0)^{\frac{1}{j}} \cdot R_j^{\frac {j-1}{j}}
  \lec_{\om^0} R_j^{\frac {j-1}{j}}
\]
by~\eqref{R_0}. Combining the above, using~\eqref{normpres} again, yields
\EQ{\label{dotRjbound}
  |\dt R_j | \lec_{\om^0} 
  R_j^{\frac {d-2}{2j} }\cdot R_j^{\frac {j-1}{j}} = R_j^{1+\frac {d-4}{2j}}.
}
Integrating the differential inequality~\eqref{dotRjbound} 
immediately yields:
\begin{proposition} 
Suppose $d-2 < j <\infty$ and $R_j(0) < \infty$. Then
\EN{
\item $d=3$: \; $R_j(t) \le C(1+t)^{2j}$ \;\;
for $C = C(\om^0,j)$;
\item $d=4$: \; $R_j(t) \le R_j(0) e^{Ct}$ \;\;
for $C = C(\om^0,j)$;
\item $d\ge 5$: \;
there are $C = C(\om^0,j)$ and $T = T(\om^0,j)>0$ so that
for $t <\min(T,T_{max}) $,
$R_j(t) \le C(T-t)^{-\frac {2j}{d-4}}$.
}
\end{proposition}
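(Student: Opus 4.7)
The proposition is a direct consequence of the differential inequality
\[
  |\dot R_j(t)| \lesssim_{\omega^0} R_j(t)^{1 + \frac{d-4}{2j}}
\]
already established in~\eqref{dotRjbound}, together with the fact that $R_j(0)$ is finite. So my plan is simply to integrate this scalar ODE inequality, separating the three cases according to the sign of the exponent $\frac{d-4}{2j}$.

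First, in dimension $d = 3$, the exponent is $\frac{d-4}{2j} = -\frac{1}{2j} < 0$, so $\dot R_j \lesssim_{\omega^0} R_j^{1 - \frac{1}{2j}}$. I would rewrite this as $\frac{d}{dt}\bigl(R_j^{1/(2j)}\bigr) \lesssim_{\omega^0, j} 1$ and integrate to get $R_j(t)^{1/(2j)} \le R_j(0)^{1/(2j)} + C t$, whence $R_j(t) \le C(1 + t)^{2j}$ after absorbing the initial value. Second, in $d = 4$ the exponent vanishes and the inequality reads $\dot R_j \lesssim_{\omega^0} R_j$, which integrates by Gronwall to the claimed exponential bound $R_j(t) \le R_j(0) e^{Ct}$. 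Third, for $d \ge 5$ we have $\beta := \frac{d-4}{2j} > 0$, so $\dot R_j \lesssim_{\omega^0, j} R_j^{1+\beta}$ rearranges to $-\frac{d}{dt}\bigl(R_j^{-\beta}\bigr) \lesssim_{\omega^0, j} 1$; integrating gives $R_j(t)^{-\beta} \ge R_j(0)^{-\beta} - C \beta t$, which yields a bound of the form $R_j(t) \le C(T - t)^{-1/\beta} = C(T - t)^{-\frac{2j}{d-4}}$ valid as long as $t$ stays below both $T_{\max}$ and the ODE blow-up time $T := R_j(0)^{-\beta}/(C\beta)$, giving $T = T(\omega^0, j) > 0$.

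There is no substantial obstacle here: all the real work is already packaged into~\eqref{dotRjbound}, which in turn relied on the $L^\infty$ bound~\eqref{ubound} for $u_r$, H\"older interpolation between $R_0$ and $R_j$, and the conservation~\eqref{normpres} of the Lebesgue norms of $\omega/r^{d-2}$. The only minor point to note is that the integration by parts~\eqref{dtRj} used to derive the ODE requires the boundary terms at $r = 0$ and $z = 0$ to vanish; these are handled by the smoothness at the axis (the factor $r^{j-1}$ with $j > d-2 \ge 1$ kills the $r = 0$ term), by the vanishing $\omega(r,0,t) \equiv 0$ from Remark~\ref{preservation} at $z = 0$, and by the decay of $\omega$ at infinity guaranteed by~\eqref{ass2} together with the assumed finiteness of $R_j$ at time $t$.

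Deducing Theorem~\ref{upper} from the proposition is then immediate upon specializing $j = d-1$, which satisfies $j > d-2$, and observing that the assumptions~\eqref{ass2} ensure $R_{d-1}(0) = R(0) < \infty$.
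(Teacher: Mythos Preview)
Your proposal is correct and follows exactly the paper's approach: the paper's proof consists of the single line ``Integrating the differential inequality~\eqref{dotRjbound} immediately yields,'' and you have simply written out that integration case by case. The additional remarks on boundary terms and the deduction of Theorem~\ref{upper} from the $j=d-1$ case are also in line with the paper.
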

The case $j=d-1$ establishes Theorem~\ref{upper}. $\Box$

\section{Lower bounds on vorticity stretching}
\label{GrowthSection}

This section contains the proof of Theorem~\ref{lower}.

\subsection{Monotonicity of the first vertical moment}
\label{zmon}

Following~\cite{ChoiJeong}, we introduce the first vertical moment
\[
  Z(t) = \iint_{\Pi_+} z \; \om, \qquad
  \om := \om(r,z,t)\,drdz \;,
\]
noting that $0< Z(0) < \infty$ by assumptions~\eqref{ass1p} and~\eqref{ass2},
and prove:
\begin{proposition}  \label{zprop}
For all $d \ge 3$,  
\[
  \dt Z(t) < 0
\]
(for $t \in [0,T_{max})$ when $d \geq 5$).
\end{proposition}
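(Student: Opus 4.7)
My plan is to differentiate $Z(t)$ using the transport equation \eqref{transport2}, integrate by parts twice, and then use the divergence-free condition to convert the result into a manifestly signed quadratic in $u_r$.

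First I would compute
\[
\dt Z = -\iint_{\Pi_+} z\,\bigl[\pd_r(u_r\om) + \pd_z(u_z\om)\bigr]\,dr\,dz .
\]
Since $z$ is $r$-independent, integrating the first term by parts in $r$ kills it (the boundary terms at $r=0$ and $r=\infty$ vanish by smoothness on the axis and decay). Integrating the second term by parts in $z$ uses $z|_{z=0}=0$ and decay as $z\to\infty$ to yield
\[
\dt Z = \iint_{\Pi_+} u_z\,\om\, dr\,dz .
\]

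Next I would substitute $\om = \p_r u_z - \p_z u_r$ from \eqref{vorticity}. The term $\int u_z \p_r u_z = \tfrac12 \int \p_r(u_z^2)$ integrates away. The term $-\int u_z \p_z u_r$ is integrated by parts in $z$, where the boundary at $z=0$ vanishes because of the symmetry-induced condition $u_z(r,0,t)\equiv 0$ from Remark \ref{preservation}, giving $\iint_{\Pi_+} u_r\,\p_z u_z\,dr\,dz$. Finally I would apply the divergence-free condition \eqref{divfree} in the form $\p_z u_z = -(d-2)\frac{u_r}{r} - \p_r u_r$. The $\p_r u_r$ piece contributes $-\tfrac12 \int \p_r(u_r^2)$ which integrates away (using $u_r=0$ on the axis and decay), and the remaining piece gives
\[
\dt Z \;=\; -(d-2)\iint_{\Pi_+} \frac{u_r^2}{r}\, dr\, dz \;\le\; 0 .
\]
Strict negativity follows because $u_r\equiv 0$ on $\Pi_+$ together with \eqref{divfree} and $u_z|_{z=0}=0$ would force $u\equiv 0$, contradicting the nontriviality in \eqref{ass1pt}.

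The main obstacle is justifying the several integrations by parts — namely showing that the boundary contributions at $r=0$, $r=\infty$, and $z=\infty$ all vanish. Vanishing on the axis $r=0$ follows from smoothness and axisymmetry (in particular $u_r=O(r)$ and $\om = O(r)$). Vanishing at spatial infinity is the more delicate point: it requires sufficient decay of $u$ and of $r\om$, which I would derive from the regularity assumption $u^0\in H^s$ with $s>2+d/2$, the integrability hypotheses \eqref{ass2} (especially $r\om^0\in L^1$ and $\om^0/r^{d-2}\in L^1\cap L^\infty$), persistence of these properties under the flow, and the Biot--Savart expressions \eqref{ur}--\eqref{uz} combined with the bound \eqref{ubound}. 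Finally, finiteness of $\iint_{\Pi_+} u_r^2/r\,dr\,dz$ can be checked from these same decay and boundedness inputs, so that the identity is genuinely an identity and not just formal.
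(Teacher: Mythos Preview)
Your approach is essentially correct and genuinely different from the paper's, but there is one slip: the term $\iint_{\Pi_+} u_z\,\p_r u_z\,dr\,dz = \tfrac12\iint_{\Pi_+}\p_r(u_z^2)\,dr\,dz$ does \emph{not} integrate away, because $u_z$ need not vanish on the axis $r=0$ (think of the axial flow induced by a vortex ring through its center). The boundary contribution at $r=0$ is $-\tfrac12\int_0^\infty u_z(0,z,t)^2\,dz$, so the corrected identity reads
\[
\dt Z \;=\; -\frac12\int_0^\infty u_z(0,z,t)^2\,dz \;-\; (d-2)\iint_{\Pi_+}\frac{u_r^2}{r}\,dr\,dz,
\]
which is still manifestly $\le 0$, and your strict-negativity argument goes through unchanged (indeed $u_r\equiv 0$ already forces $u\equiv 0$).

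Compared with the paper: after the common first step $\dt Z=\iint_{\Pi_+}u_z\,\om$, the paper substitutes the Biot--Savart expression~\eqref{uz} for $u_z$, uses the oddness of $\om$ to reduce to $(\Pi_+)^2$, symmetrizes in $(r,z)\leftrightarrow(\rb,\zb)$, and reduces positivity of the resulting kernel to the monotonicity of $-\cF'$ and $\cF^*$ (Lemma~\ref{Flem}); for $d=3$ the latter requires elliptic-integral identities imported from~\cite{ChoiJeong}. Your route bypasses all kernel analysis via a direct energy-type identity and is uniform in $d$, which is a real simplification. The price is the careful justification of the integrations by parts at infinity and the finiteness of $\iint_{\Pi_+}u_r^2/r\,dr\,dz$, which you correctly flag; the paper's approach, working directly with $\om\in L^1$ under a bounded kernel, sidesteps those decay questions. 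A clean way to close your argument is to cut off at $r,z\le M$, integrate by parts on the bounded domain, and pass to the limit: the boundary terms on $\{r=M\}\cup\{z=M\}$ vanish by the $C_0$ decay of $u$ coming from $H^s\hookrightarrow C_0$, and monotone convergence then gives both the identity and finiteness of the right-hand side (since the left-hand side $\iint u_z\,\om$ is finite by $u_z\in L^\infty$, $\om\in L^1$).
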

\begin{proof}
Compute, using~\eqref{transport2}, integration by parts,
and the boundary conditions on $\p \Pi_+$,
\begin{equation} \label{zdot}
  \dt Z = \iint\limits_{\Pi_+}  z \; \pd_t \om(r,z,t) drdz
  = \iint\limits_{\Pi_+}   u_z \; \om \; .
\end{equation}
By \eqref{uz},
\EQN{
  u_z(r,z,t) = -\frac {1} {2\pi r^{d-2}} \iint\limits_{\Pi} K(r,z;\rb,\zb)
  \; (r\bar r )^{\frac{d-2}{2}} \; \omb \;,
  \qquad \omb := \om(\rb,\zb,t)\, d \rb d \zb \;,
 }
where
\EQN{
K(r,z;\rb,\zb) &=  \cF'(S) \pd_r S + \frac{d-2}{2r} \cF(S)
=  \bke{\frac{2(r-\bar r)}{r\bar r} -\frac{1}{r} S } \cF'(S) +\frac{d-2}{2r} \cF(S)
\\
& = \frac 1{r \bar r} \bkt{2(r-\bar r) \cF'(S) + \bar r \cF^*(S)},
}
and
\[
  \cF^*(s) := \frac{d-2}{2} \cF(s) - s  \cF'(s).
\]
Thus from~\eqref{zdot},
\EQN{
  -\dt Z = \iint\limits_{\Pi_+} \frac {1} {2\pi r^{d-2}} 
  \iint\limits_{\Pi} K(r,z;\rb,\zb) \; (r\bar r )^{\frac{d-2}{2}}
  \; \omb \; \om.
}
Changing the domain of the inner integral from $\Pi$ to $\Pi_+$, using 
the oddness $\om(\bar r,-\bar z) = -\om(\bar r,\bar z)$, yields
\EQN{
-\dt Z&=\iiiint\limits_{(\Pi_+)^2}  \rb^{d-2}
\bkt{K(r,z;\bar r,\bar z)-K(r,z;\bar r,-\bar z)} \,  
\frac { \om \; \bar\om} {2\pi (r\bar r)^{\frac{d-2}{2}}}.}
Symmetrizing in $(r,z) \leftrightarrow (\bar r,\bar z)$, noting that both 
\begin{equation} \label{SSbar}
  S =   \frac{(r-\rb)^2 + (z-\zb)^2}{r \rb} \quad \mbox{ and } \quad 
  \bar S =  \frac{(r-\rb)^2 + (z+\zb)^2}{r \rb}  
\end{equation}
are unchanged, we arrive at
\EQ{\label{Zdotza.sym}
  -\dt Z =\iiiint\limits_{(\Pi_+)^2} 
  \tilde K(r,z;\rb,\zb)\,  \frac { \om \bar \om} {4\pi (r\bar r)^{\frac{d}{2}}},
}
where
\[
\begin{split}
\tilde K &=
\quad  \bar r^{d-2} \bket{2(r-\bar r) \cF'(S) +\bar r \cF^*(S)
- 2(r-\bar r) \cF'(\bar S) -\bar r \cF^*(\bar S)}
\\
&\quad +   r^{d-2} \bket{-2(r-\bar r) \cF'(S) +r \cF^*(S)
+ 2(r-\bar r) \cF'(\bar S) - r \cF^*(\bar S)}
\\
&= \cH(r,\bar r, S) - \cH(r,\bar r, \Sb)
\end{split}
\]
and
\begin{equation}  \label{Hform}
\begin{split}
  \cH(r,\bar r, s)
  &=2 \bar r^{d-2} (r-\bar r) \cF'(s) +\bar r^{d-1} \cF^*(s)
  -2 r^{d-2} (r-\bar r) \cF'(s) + r^{d-1} \cF^*(s) \\
  &=-2(r^{d-2}- \bar r^{d-2}) (r-\bar r) \cF'(s) +
  (r^{d-1} +\bar r^{d-1}) \cF^*(s).
\end{split}
\end{equation}
To prove Proposition~\ref{zprop}, 
by~\eqref{Zdotza.sym}, it suffices to show $\tilde K >0$.
Since $S < \bar S$, by~\eqref{Hform}, it suffices to show both
$- \cF'(s)$ and $\cF^*(s)$ are decreasing in $s$ for all $s>0$. 
Indeed, we have:
\begin{lemma} \label{Flem}
For all $d \geq 3$ and $s > 0$,
\begin{equation}  \label{F1}
  \cF''(s) > 0
\end{equation} 
and
\begin{equation} \label{F2} 
  (\cF^*)'(s) < 0.
\end{equation}
\end{lemma}
Proposition~\ref{zprop} follows, subject to the proof of Lemma~\ref{Flem}. 
\end{proof}
\begin{proof}
Using~\eqref{Fdef}, we compute
\begin{equation}  \label{mon1}
\begin{split}
 \cF'(s) &= -\frac{(d-2)}{2} \int_0^\pi \frac{(\sin \th)^{d-3}\cos \th}{[2(1-\cos \th) + s]^{\frac{d}{2}}}\,d\th \\
 &=  -\frac{(d-2)}{2} \int_0^\frac{\pi}{2} (\sin \th)^{d-3} \cos \th
 \left[  \frac{1}{2(1-\cos \th) + s]^{\frac{d}{2}}} -  
 \frac{1}{2(1+\cos \th) + s]^{\frac{d}{2}}} \right] \; < 0
\end{split}
\end{equation}
by changing variable $\th \mapsto \pi - \th$ on $[\frac{\pi}{2},\pi]$, and
\begin{equation} \label{mon2}
\begin{split}
&  \cF''(s) = \frac{d(d-2)}{4} \int_0^\pi  \frac{(\sin \th)^{d-3}
  \cos \th}{[2(1-\cos \th) + s]^{\frac{d}{2}+1}}\,d\th  \\
  &= \frac{d(d-2)}{4} \int_0^\frac{\pi}{2}    
  (\sin \th)^{d-3} \cos \th \left[  
  \frac{1}{2(1-\cos \th) + s]^{\frac{d}{2}+1}} - 
  \frac{1}{2(1+\cos \th) + s]^{\frac{d}{2}+1}}
  \right] d \th \; > 0,
\end{split}
\end{equation}
which establishes~\eqref{F1}. Next,
\EQN{
(\cF^*)'(s) &= \frac d{ds} \bke{\frac{d-2}{2} \cF(s) - s  \cF'(s)}
= \frac{(d-4)}{2} \cF'(s) - s  \cF''(s),
}
so~\eqref{F2} for $d \geq 4$ follows from~\eqref{mon1} and~\eqref{mon2}. 
The case $d=3$ of~\eqref{F2} is proved in \cite[Lemma 3.3]{ChoiJeong}, 
using some elliptic integral relations.
This completes the proof of Lemma~\ref{Flem}.
\end{proof}

As an immediate consequence of Proposition~\ref{zdot} and~\eqref{ass2}, we have
\begin{equation} \label{zbound}
  0 < Z(t) < Z(0) \lec_{\om^0} 1.
\end{equation}

\subsection{Monotonicity of the $d-1$ radial moment}
\label{rmon}

By \eqref{dtRj} with $j=d-1$,
\EQ{\label{eq9.1}
 \dt R(t) = (d-1) \iint\limits_{\Pi_+} r^{d-2} u_r \; \om \;.
}
Changing variables $\zb \mapsto -\zb$ for $\zb < 0$ in
expression~\eqref{ur}, and using~\eqref{ass1ot}, we have
\EQS{\label{eq9.4}
  u_r(r,z,t) = \frac{1}{\pi r^{d-2}} 
  \iint\limits_{\Pi_+} \bkt{\cF'(S)(z-\bar z) - \cF'(\bar S)(z+\bar z)} 
  \; (r\bar r )^{\frac{d}{2}-2} \; \omb \; .
}
Substituting \eqref{eq9.4} in \eqref{eq9.1}, we get
\EQ{\label{eq9.5}
 \dt R(t) = \frac{d-1}{\pi} \iiiint\limits_{(\Pi_+)^2}
 \bkt{\cF'(S)(z-\bar z) - \cF'(\bar S)(z+\bar z)}  
  \,(r\bar r )^{\frac{d}{2}-2} \,\om \bar \om \; .
}
Noting the anti-symmetry of the factor $\cF'(S)(z-\bar z)$ in 
$z \leftrightarrow \bar z$, its integration in \eqref{eq9.5} vanishes, 
and so  
\EQ{\label{Rdotform}
  \dt R(t) = \frac{d-1}{\pi} 
  \iiiint\limits_{(\Pi_+)^2} \left[ -\cF'(\bar S) \right] (z+\bar z)
  \,(r\bar r )^{\frac{d}2-2} \,\om\bar \om.
}
As an immediate consequence of this expression and~\eqref{mon1},
we have the monotonicity of $R$:
\begin{proposition}  \label{Rprop}
For all $d \ge 3$,  
\[
  \dt R(t) > 0,
\]
(for $t \in [0,T_{max})$ when $d \geq 5$).
\end{proposition}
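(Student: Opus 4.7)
The proof should be essentially immediate once the representation formula~\eqref{Rdotform} for $\dot R(t)$ and the sign information from Lemma~\ref{Flem} are in hand. The plan is simply to check that every factor in the integrand of~\eqref{Rdotform} is non-negative on the domain of integration, and that strict positivity holds on a set of positive $\om \otimes \bar\om$ measure.

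Concretely, I would proceed as follows. Starting from
\[
  \dt R(t) = \frac{d-1}{\pi} \iiiint\limits_{(\Pi_+)^2} \left[ -\cF'(\bar S) \right] (z+\bar z) \,(r\bar r )^{\frac{d}2-2} \,\om\bar \om,
\]
I would note factor by factor: (i) on $(\Pi_+)^2$ both $z \geq 0$ and $\bar z \geq 0$, so $z + \bar z \geq 0$; (ii) $r, \bar r > 0$ on $\Pi_+$, so $(r\bar r)^{d/2-2} > 0$ for every $d \geq 3$; (iii) $\bar S = \frac{(r-\bar r)^2 + (z+\bar z)^2}{r \bar r} \geq 0$, with $\bar S > 0$ except on the negligible set where $r=\bar r$ and $z=\bar z = 0$; and (iv) by~\eqref{mon1} in Lemma~\ref{Flem}, $-\cF'(\bar S) > 0$ wherever $\bar S > 0$. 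Finally, (v) the measures $\om, \bar\om$ on $\Pi_+$ are non-negative by~\eqref{ass1pt}.

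To upgrade non-negativity to strict positivity of $\dt R$, I would use the boundary condition $\om(r,0,t) \equiv 0$ from Remark~\ref{preservation} together with $\om \not\equiv 0$ on $\Pi_+$: the support of the measure $\om$ must meet $\{z > 0\}$ on a set of positive measure, so on a positive $\om \otimes \bar\om$ measure subset of $(\Pi_+)^2$ we have $z + \bar z > 0$, $\bar S > 0$, and hence the integrand is strictly positive. This gives $\dt R(t) > 0$ for all $t$ in the relevant time interval.

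There is really no main obstacle here: the heavy lifting was already done in deriving~\eqref{Rdotform} (the cancellation of the anti-symmetric $\cF'(S)(z-\bar z)$ contribution) and in establishing the sign $\cF' < 0$ in Lemma~\ref{Flem}. The only small care needed is the strict-versus-weak inequality step, which is handled by the boundary vanishing of $\om$ on $\{z=0\}$.
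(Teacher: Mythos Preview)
Your proposal is correct and matches the paper's approach exactly: the paper states Proposition~\ref{Rprop} as ``an immediate consequence of this expression and~\eqref{mon1}'', i.e., of the representation~\eqref{Rdotform} together with $-\cF'>0$. Your factor-by-factor check and your care with the strict inequality (via $\om(r,0,t)=0$ and $\om\not\equiv 0$) simply spell out what the paper leaves implicit.
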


To get a more precise lower bound for $\dt R$, we first need an elementary estimate for the behaviour of $-\cF'(s)$:
\begin{lemma}
For all $s > 0$,
\EQ{\label{Jest}
 -\cF'(s) \sim \frac 1 {s (1+s)^{\frac d2}}.
}
\end{lemma}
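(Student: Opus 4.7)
The plan is to start from the positive symmetrized expression~\eqref{mon1} for $\cF'(s)$ and reduce the problem to an elementary one-variable integral via the substitution $u=1-\cos\th$, which sends $\sin^{d-3}\th\,d\th \mapsto [u(2-u)]^{(d-4)/2}\,du$. This yields the manifestly non-negative representation
\[
-\cF'(s) = \frac{d-2}{2}\int_0^1[u(2-u)]^{\frac{d-4}{2}}(1-u)\left[\frac{1}{(2u+s)^{d/2}}-\frac{1}{(4-2u+s)^{d/2}}\right]du,
\]
since $2u+s<4-2u+s$ on $(0,1)$. The target behaves like $s^{-d/2-1}$ for $s\ge 1$ and like $s^{-1}$ for $s\le 1$, so I would analyze the two regimes separately.

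For $s\ge 1$, I would apply the mean value theorem to the bracket, writing it as $\frac{2d(1-u)}{c^{d/2+1}}$ for some $c\in[2u+s,4-2u+s]$. Both endpoints are comparable to $1+s$ in this regime, so $c\sim 1+s$ uniformly in $u\in[0,1]$, and substitution gives
\[
-\cF'(s) \sim \frac{1}{(1+s)^{d/2+1}}\int_0^1[u(2-u)]^{\frac{d-4}{2}}(1-u)^2\,du \sim \frac{1}{s(1+s)^{d/2}},
\]
the residual $u$-integral being a finite positive constant depending only on $d$ (integrable near $0$ and $2$ because $(d-4)/2>-1$ for $d\ge 3$).

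For $s\le 1$ I would restrict to $u\in[0,1/2]$, where $(4-2u+s)/(2u+s)\ge 3/2$, so that the subtracted fraction is at most $(2/3)^{d/2}$ times the first; the bracket is thus comparable to $(u+s)^{-d/2}$. Using $u(2-u)\sim u$ and $1-u\sim 1$ on $[0,1/2]$, this portion contributes
\[
\sim\int_0^{1/2}\frac{u^{(d-4)/2}}{(u+s)^{d/2}}\,du = \frac{1}{s}\int_0^{1/(2s)}\frac{v^{(d-4)/2}}{(1+v)^{d/2}}\,dv \sim \frac{1}{s}
\]
after the rescaling $u=sv$ and noting that $\int_0^\infty v^{(d-4)/2}(1+v)^{-d/2}\,dv$ converges for all $d\ge 3$ (power $v^{(d-4)/2}$ integrable at $0$, decay $v^{-2}$ at infinity). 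The complementary contribution from $u\in[1/2,1]$ is bounded uniformly in $s$, hence negligible for small $s$, and being non-negative it does not corrupt the lower bound. This gives $-\cF'(s)\sim 1/s\sim\frac{1}{s(1+s)^{d/2}}$ for $s\le 1$.

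The main subtlety is that neither a single MVT call nor a no-cancellation estimate works uniformly in $s$: MVT overestimates by a factor $\sim 1/s$ for small $s$, since $c$ is only pinned to a wide interval $[2u+s,4-2u+s]$ disproportionate to its left endpoint, while simply dropping the subtracted term $(4-2u+s)^{-d/2}$ overestimates $-\cF'$ by a factor $\sim s$ for large $s$. The two-regime split exploits cancellation precisely where it is needed.
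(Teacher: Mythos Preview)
Your proof is correct and follows essentially the same approach as the paper: start from the symmetrized positive expression~\eqref{mon1}, control the difference of inverse powers, and split into the regimes $s\gtrsim 1$ and $s\ll 1$. The only organizational differences are that you substitute $u=1-\cos\th$ at the outset and handle the difference $(2u+s)^{-d/2}-(4-2u+s)^{-d/2}$ ad hoc (MVT for large $s$, direct comparison for small $s$), whereas the paper packages this into a single reusable estimate $A^{-\alpha}-B^{-\alpha}\sim (B-A)/(A^\alpha B)$ (Lemma~\ref{th10.1}) valid for all $0<A<B$, obtaining one integrand to evaluate in both regimes; your final rescaling $u=sv$ is equivalent to the paper's splitting at $\th\sim\sqrt{s}$.
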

\begin{proof}
From~\eqref{mon1},
\[
  -\cF'(s) \sim \int_0^{\frac \pi 2} 
  (\sin \th)^{d-3} \cos \th \bke{A^{-\frac d2} - B ^{-\frac d2}}\,d\th ,
\]
where
\[
  A = 2(1-\cos \th) + s, \quad B = 2(1+\cos \th) + s.
\]
By a simple calculus Lemma~\ref{th10.1}, for $0<A<B$ and $\al>0$,
\[
A^{-\al} - B ^{-\al} \sim \frac{B-A} {A^{\al} B}.
\]
Thus
\[
-\cF'(s) \sim  \int_0^{\frac \pi 2} \frac{(\sin \th)^{d-3}\cos^2 \th}{ [ 2(1-\cos \th) + s]^{\frac d2}[ 2(1+\cos \th) + s] }\,d\th .
\]
From this expression,
\[
  s \gec 1 \;\; \implies \;\; -\cF'(s) \sim s^{-\frac d2-1},
\]
while  
\EQN{
s \ll 1 \;\; \implies \;\;
-\cF'(s) &\sim \int_0^{\frac \pi 4} \frac{\th^{d-3}}{(\th^2+s)^{\frac d2} }\,d\th
+\int_{\frac \pi 4}^{\frac \pi 2} \cos^2 \th\,d\th 
\\
&\sim \int_0^{\sqrt s} \frac{\th^{d-3}}{s^{\frac d2} }\,d\th
+\int_{\sqrt s}^{\frac \pi 4} \frac{\th^{d-3}}{(\th^2)^{\frac d2} }\,d\th 
+1
 \sim \frac{1}{s},
}
which together give~\eqref{Jest}.
\end{proof}

Returning to~\eqref{Rdotform}, and observing from~\eqref{SSbar} that 
\[
  1 + \Sb \sim 4 + \bar S = 
  \frac1{r \bar r} \bkt{(r+\bar r)^2 + (z+\bar z)^2},
\]
we find
\begin{equation}  \label{Rdotest}
\dt R(t) \sim \iiiint\limits_{(\Pi_+)^2}  \frac {(r\bar r )^{d-1}(z+\bar z) \; \om \omb} {\bkt{(r-\bar r)^2 + (z+\bar z)^2} \bkt{(r+\bar r)^2 + (z+\bar z)^2}^{\frac d2}}.
\end{equation}

\subsection{An upper bound for the kinetic energy}
\label{keupper}

Again following the strategy of~\cite{ChoiJeong}, we prove an upper bound for the kinetic energy~\eqref{coe}
\[
  E = \frac 12\int_{\R^d} |u(x,t)|^2 \dd x 
  = \frac{C_d}2 \iint\limits_\Pi \bkt{(u_r)^2+ (u_z)^2} r^{d-2} \dd r\dd z.
\]
\begin{lemma} 
\begin{equation} \label{KEbound}
E \lec \iiiint\limits_{(\Pi_+)^2}  \frac {z\bar z\,(r\bar r )^{d-1} 
\log \left(2+ r \rb \bkt{(r-\bar r)^2 + (z-\bar z)^2}^{-1} \right)
} {\bkt{(r-\bar r)^2 + (z+ \bar z)^2} \bkt{(r+\bar r)^2 + (z-\bar z)^2}^{\frac d2}} 
   \,\om\bar \om.
\end{equation}
\end{lemma}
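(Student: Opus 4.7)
The plan is to reduce the kinetic energy to a symmetric double integral on $(\Pi_+)^2$ whose kernel can be bounded pointwise. Using $u_r=-\p_z\psi/r^{d-2}$ and $u_z=\p_r\psi/r^{d-2}$ from~\eqref{velfromstream}, we have $r^{d-2}|u|^2=[(\p_r\psi)^2+(\p_z\psi)^2]/r^{d-2}$; since $\om=\p_r(\p_r\psi/r^{d-2})+\p_z(\p_z\psi/r^{d-2})$ by \eqref{velfromstream}--\eqref{vorticity}, integration by parts gives
\[
  E \;=\; -\tfrac{C_d}{2}\iint_{\Pi}\psi\,\om\,dr\,dz.
\]
Substituting~\eqref{stream} and splitting each copy of $\Pi$ into $\Pi_+\cup\Pi_-$, the oddness~\eqref{ass1ot} combined with $\zb\mapsto-\zb$ or $z\mapsto-z$ on the mixed regions (which convert $S$ into $\Sb$) collapses the four resulting pieces into
\[
  E \;\sim\; \iiiint_{(\Pi_+)^2}\bkt{\cF(S)-\cF(\Sb)}\,(r\rb)^{\frac{d}{2}-1}\,\om\,\omb,
\]
non-negative in view of Lemma~\ref{Flem}.

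Next, I would write $\cF(S)-\cF(\Sb)=\int_S^{\Sb}|\cF'(t)|\,dt$ and invoke~\eqref{Jest} to reduce the lemma to the pointwise claim
\[
  \int_S^{\Sb}\frac{dt}{t(1+t)^{d/2}} \;\lec\; \frac{(\Sb-S)\log(2+1/S)}{\Sb\,(1+S)^{d/2}}\qquad\text{for all }0<S<\Sb.
\]
Granting this, one recovers~\eqref{KEbound} by substituting the algebraic identities $\Sb-S=4z\zb/(r\rb)$, $r\rb\,\Sb=(r-\rb)^2+(z+\zb)^2$, $(r+\rb)^2+(z-\zb)^2=r\rb(S+4)\sim r\rb(1+S)$, and $1/S=r\rb/[(r-\rb)^2+(z-\zb)^2]$ into the kernel of the previous display.

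The main obstacle is this pointwise inequality, whose subtlety is the logarithmic factor: the integrand $1/[t(1+t)^{d/2}]$ is only log-integrable at $t=0$, and the factor $\log(2+1/S)$ on the right is forced precisely to absorb the near-zero part of the integration range when $S\ll 1$. I would prove it by a case analysis on the sizes of $S$ and $\Sb$ relative to $1$. If $S\geq 1$, the integrand is $\sim t^{-d/2-1}$ throughout and the MVT for $t\mapsto t^{d/2}$ yields $\int\lec(\Sb-S)/(S^{d/2}\Sb)$, matching the right-hand side where $\log(2+1/S)\sim 1$ and $1+S\sim S$. If $\Sb\leq 1$, the integral equals $\log(\Sb/S)$, controlled by subsplitting on whether $\Sb\leq 2S$ or $\Sb>2S$ and using respectively $\log(1+x)\leq x$ or $(\Sb-S)/\Sb\geq 1/2$ together with $\log(\Sb/S)\leq\log(1/S)$. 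In the intermediate regime $S<1<\Sb$, splitting at $t=1$ gives total $\lec\log(2+1/S)$, which is matched by $(\Sb-S)\log(2+1/S)/\Sb$ after a short check separating $\Sb\geq 2$ from $\Sb\in(1,2)$. This bookkeeping is elementary but is the only step where the log factor emerges organically.
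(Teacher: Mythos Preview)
Your proposal is correct and takes a genuinely different route from the paper's argument. Both proofs begin identically: integration by parts gives $E=-c\iint_\Pi\psi\,\om$, and after substituting~\eqref{stream} and exploiting the oddness~\eqref{ass1ot} one arrives at
\[
  E\;\sim\;\iiiint_{(\Pi_+)^2}\bigl[\cF(S)-\cF(\Sb)\bigr]\,(r\rb)^{\frac d2-1}\,\om\,\omb.
\]
The difference is in how $\cF(S)-\cF(\Sb)$ is estimated. The paper returns to the $\th$-integral definition~\eqref{Fdef}, performs the reflection $\th\mapsto\pi-\th$ to produce a four-term alternating expression, and then applies the second-difference calculus estimate~\eqref{th10.1b} pointwise in $\th$; the logarithm emerges from a separate small-$s$ analysis of the resulting $\th$-integral $J_1(s)$. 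You instead write $\cF(S)-\cF(\Sb)=\int_S^{\Sb}(-\cF')\,dt$, invoke the already-established bound~\eqref{Jest} for $-\cF'$, and reduce everything to the elementary one-variable inequality $\int_S^{\Sb}\frac{dt}{t(1+t)^{d/2}}\lec(\Sb-S)\log(2+1/S)\,\Sb^{-1}(1+S)^{-d/2}$, whose case analysis is routine. Your approach is more economical in that it recycles~\eqref{Jest} rather than redoing $\th$-integral asymptotics; the paper's approach, on the other hand, actually yields a two-sided estimate $J_E\sim\ldots$ (not just an upper bound), which is not needed for the lemma but records sharper information.
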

\begin{proof}
By~\eqref{velfromstream}, and integrating by parts using $\psi|_{r=0}=\psi|_{z=0}=0$,
\EQN{
E = c \iint\limits_\Pi \bkt{(-\pd_z \psi) u_r+ (\pd_r \psi)u_z} \dd r\dd z
= c \iint\limits_\Pi \psi \bkt{\pd_z  u_r - \pd_r u_z} \dd r\dd z
= -c \iint\limits_\Pi \psi \; \om
}
with $c > 0$. Using~\eqref{stream} here yields,
\[
  E = \frac{c}{2\pi} \iiiint\limits_{\Pi^2} \cF(S)  \,(r\bar r )^{\frac{d}2-1} \, \om \bar \om \; .
\]
Using the oddness of $\om$ and $\bar \om$ in $z$ and $\bar z$,
\EQ{ \label{Eform}
E = \frac{c}{\pi} \iiiint\limits_{(\Pi_+)^2} J_E \; (r\bar r )^{\frac{d}2-1} \, \om \bar \om,
\qquad J_E = \cF(S) -\cF(\bar S).
}
By changing $\th \mapsto \pi - \th$ for $\th \in [\frac{\pi}{2},\pi]$,
\EQN{
J_E&
=\int_0^\pi  \frac{(\sin \th)^{d-3} \cos \th}{[2(1-\cos \th) + S]^{\frac d2 -1}}-\frac{(\sin \th)^{d-3} \cos \th}{[2(1-\cos \th) +\bar S]^{\frac d2-1}}\,d\th
\\
&= \int_0^{\frac\pi 2} \Bigg\{ \frac{1}{[2(1-\cos \th) + S]^{\frac d2-1}}-\frac{1}{[2(1-\cos \th) +\bar S]^{\frac d2-1}}
\\
&\qquad\qquad\qquad   - \frac{1}{[2(1+\cos \th) + S]^{\frac d2-1}}+ \frac{1}{[2(1+\cos \th) +\bar S]^{\frac d2-1}}\Bigg\} (\sin \th)^{d-3} \cos \th\,d\th.
}
By calculus Lemma~\ref{th10.1}, and $\bar S-S=4 \frac {z\bar z}{r \bar r}$,
\[
\begin{split}
J_E
&\sim \frac {z\bar z}{r \bar r}
 \int_0^{\frac\pi 2}  \frac{ (\sin \th)^{d-3} \cos^2 \th\,d\th}{[2(1-\cos \th) + S]^{\frac d2-1}[2(1-\cos \th) +\bar S][2(1+\cos \th) + S]} 
\\
&\sim \frac {z\bar z}{r \bar r(1+S)}
 \int_0^{\frac\pi 2}  \frac{ (\sin \th)^{d-3} \cos^2 \th\,d\th}{[2(1-\cos \th) + S]^{\frac d2-1}[2(1-\cos \th) +\bar S]} .
\end{split}
\]
From here we have an upper bound
\begin{equation} \label{J1}
  J_E \lec \frac {z\bar z}{r \bar r \bar S(1+S)} J_1(S), \qquad J_1(s) =
 \int_0^{\frac\pi 2}  \frac{ (\sin \th)^{d-3} \cos^2 \th\,d\th}{[2(1-\cos \th) + s]^{\frac d2-1}} .
\end{equation}
We have
\[
  s \gec 1 \;\; \implies \;\;  J_1(s) \lec \frac 1{s^{\frac d2-1}},
\]
while
\[
\begin{split}
s \ll 1 \;\; \implies \;\;  
J_1(s) &\sim \int_0^{\sqrt s} \frac{\th^{d-3}}{s^{\frac d2-1} }\,d\th
+\int_{\sqrt s} ^{\frac \pi 4} \frac{\th^{d-3}}{(\th^2)^{\frac d2-1} }\,d\th
+\int_{\frac \pi 4}^{\frac \pi 2} \cos^2 \th\,d\th \\
&\sim  1 + \log \frac1s +1 \sim \log(2+\frac 1 {s}),
\end{split}
\] 
so 
\[
  J_1 \lec \frac 1{(1+S)^{\frac d2-1}} \log(2+\frac 1 {S}),
\]
and returning to~\eqref{J1},
\[
  J_E \lec \frac{(z \zb) \log(2 + S^{-1})}{(r \rb) \Sb (1 + S)^{\frac{d}{2}}}.
\]
Using this in~\eqref{Eform}, noting 
\[
  (r \rb) \Sb = (r-\rb)^2 + (z+\zb)^2, \qquad
  1 + S \sim \frac{(r+\rb)^2 + (z-\zb)^2}{r \rb},
\]  
yields~\eqref{KEbound}.
\end{proof}

\subsection{Lower bounds for the radial moment and the 
proof of Theorem~\ref{lower}}
\label{lowerproof}

For any $0 < \e \ll 1$, split 
\[
  (\Pi_+)^2 = \Si_\e \cup \Si_\e^c,
\]
where
\begin{equation} \label{Sig}
  \Si_\e = \{ (r,z,\rb,\zb) \in (\Pi_+)^2 \; | \; 
(r + \rb)^2 + (z-\zb)^2 \leq \e^2 \left[ 
(r + \rb)^2 + (z + \zb)^2 \right]\}.
\end{equation}
Then from~\eqref{coe} and~\eqref{KEbound}, we have
\[
  1 \sim_{\om^0} E \lec E_\e(t) + E^c_\e(t), \qquad
  E_\e = \iiiint\limits_{\Si_\e} e \,\om \omb \;, \quad
  E_\e^c = \iiiint\limits_{\Si_\e^c} e \,\om \omb \; ,
\]
where 
\[
  e =  \frac {z\bar z\,(r\bar r )^{d-1} \log( 2+ S^{-1} )
} {\bkt{(r-\bar r)^2 + (z+ \bar z)^2} \bkt{(r+\bar r)^2 + (z-\bar z)^2}^{\frac d2}} 
  .
\]

\subsubsection{Estimate in $\Si_\e$}
Since
\[
  (r - \rb)^2 + (z+\zb)^2 \geq (z+\zb)^2 \gec z \zb, \qquad
  (r + \rb)^2 + (z-\zb)^2 \geq (r+\rb)^2 \gec r \rb,
\]
we have
\[
  e \lec (r \rb)^{\frac{d}{2}-1} 
  \log(2 + S^{-1}).
\]
For $(r,z,\rb,\zb) \in \Si_\e$, 
\[
  z \sim \zb \quad \mbox{ and } \quad 
  r + \rb + |z-\zb| \lec \e z,
\]
so for any $0 \leq \mu \leq \frac{d}{2}-1$,
\begin{equation} \label{eest}
  e \lec (r \rb)^{\mu} 
(\e^2 z \zb)^{\frac{d}{2} - 1 - \mu} 
\log(2 + S^{-1}).
\end{equation}
Fix $0 < \de \ll 1$, and take 
\begin{equation} \label{mu-def}
  \mu = \left\{ \begin{array}{cc} 0 & \text{if } d=3 \\
  \frac{d-1}{d-2}(\frac{d}{2}-2 + \de) = \frac{d}{2}-1
  - \frac{d - 2(d-1)\de}{2(d-2)} & \text{if } d \geq 4 \end{array} \right..
\end{equation}
Using~\eqref{eest},
the H\"older inequality with measure $\om \omb$ (recalling~\eqref{prob}),
noting that 
\[
  \frac{\mu}{d-1} + \frac{d}{2}-1-\mu + \de \le 1
\]
(with equality when $d \ge 4$), and~\eqref{zbound}, we get
\begin{align} 
  E_\e &\lec \e^{2( \frac{d}{2} -1 - \mu )} \left( \int_{\Pi_+^2}
  (r \rb)^{d-1} \om \omb \right)^{\frac{\mu}{d-1}}
   \left( \int_{\Pi_+^2} z \zb \; \om \omb \right)^{\frac{d}{2} - 1 - \mu} 
  \| \log(2 + S^{-1}) \|_{L^{\frac{1}{\de}}(\om \omb)} 
  \nonumber \\
  \label{EHolder}
  &\lec  \e^a R^{\frac{2 \mu}{d-1}}
  \| \log(2 + S^{-1}) \|_{L^{\frac{1}{\de}}(\om \omb)}, \qquad a = \left\{ \begin{array}{cc} 
  1 & d=3 \\ \frac{d - 2(d-1)\de}{d-2} & d \geq 4 \end{array} \right..
\end{align}

To estimate the log factor, we have:
\begin{lemma} \label{log}
For $1 \leq p < \infty$,
\[
  \| \log(2 + S^{-1}) \|_{L^p(\om \omb)} \lec_{p,\om^0} \log (2 + R).
\]
\end{lemma}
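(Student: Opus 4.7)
The strategy is to raise to the $p$-th power and reduce the claim to the integral estimate
\begin{equation*}
  \iiiint_{(\Pi_+)^2} [\log(2 + S^{-1})]^p \, \om \bar\om \lec_{p,\om^0} [\log(2+R)]^p,
\end{equation*}
then to separate the singularity of $S^{-1} = r\bar r/\rho^2$ (writing $\rho:=\sqrt{(r-\bar r)^2 + (z-\bar z)^2}$) into manageable pieces. The elementary inequality $1+abc \leq (1+a)(1+b)(1+c)$ for nonnegative $a,b,c$ yields
\begin{equation*}
  \log(2 + S^{-1}) \lec 1 + \log(1+r) + \log(1+\bar r) + \log(1 + 1/\rho),
\end{equation*}
and raising to the $p$-th power via $(\sum a_i)^p \lec_p \sum a_i^p$ reduces the problem to bounding
\begin{equation*}
  A := \iint_{\Pi_+} [\log(1+r)]^p \, \om \qquad \text{and} \qquad
  B := \iiiint_{(\Pi_+)^2} [\log(1 + 1/\rho)]^p \, \om \bar\om.
\end{equation*}

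For $A$, I would apply the layer-cake formula together with the pair of tail bounds $\om(\{r>M\}) \leq \iint\om \lec_{\om^0} 1$ and $\om(\{r>M\}) \leq R/M^{d-1}$ (the latter being Chebyshev combined with the definition of $R$). Optimizing at the transition scale $M \sim R^{1/(d-1)}$ yields $A \lec_p [\log(2+R)]^p$.

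The integral $B$ is the main obstacle: the logarithmic singularity is locally integrable, but a crude application of the $L^\infty$-bound on $\om/r^{d-2}$ alone produces a factor $(1+r)^{d-2}$ which, when integrated against $\om$, would cost a polynomial in $R$ rather than a logarithm. To avoid this loss I would first establish the uniform ball-measure estimate
\begin{equation*}
  \iint_{\rho < \delta} \bar\om \lec \min\bke{1,\; (r+\delta)^{d-2}\, \delta^2},
\end{equation*}
valid for all $(r,z)\in\Pi_+$ and $\delta>0$, which follows from $\bar r \leq r+\delta$ on the disk $B_\delta(r,z)$ together with the $L^\infty$-bound on $\bar\om/\bar r^{d-2}$. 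A layer-cake argument in $\delta$, with transition at $\delta \sim (2+r)^{-(d-2)/2}$, then delivers the pointwise-in-$(r,z)$ estimate $\iint [\log(1 + 1/\rho)]^p \bar\om \lec_p [\log(2+r)]^p$. Integration against $\om$ reduces $B$ to $A$, and combining the two bounds completes the proof.
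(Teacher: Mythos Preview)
Your argument is correct, and it rests on the same two ingredients as the paper's proof: the $L^\infty$ bound $\om(\bar r,\bar z)\lec_{\om^0}\bar r^{d-2}$ to control the near-diagonal singularity, and a Chebyshev/Jensen-type bound relating $r$-moments to $R$ to control the large-$r$ tail. The organization, however, is different. The paper decomposes the \emph{domain} into three regions ($S\gec 1$; $S\ll 1$ with $r^2\rho^2\ge 1$; $S\ll 1$ with $r^2\rho^2<1$), applies Jensen's inequality directly for the $\log r$ region, and in the near-diagonal region introduces a single cutoff radius $\rho_0\sim R^{-\frac{d-2}{2(d-1)}}$ chosen to balance against $\int r^{d-2}\om\lec R^{\frac{d-2}{d-1}}$. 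You instead decompose the \emph{integrand} via the product inequality, and run two layer-cake arguments: one in $r$ for $A$ (transition at $M\sim R^{1/(d-1)}$), and one in the disk radius for $B$ with an $r$-dependent transition at $\delta\sim(2+r)^{-(d-2)/2}$, which yields the pointwise estimate $\int[\log(1+1/\rho)]^p\bar\om\lec_p[\log(2+r)]^p$ and reduces $B$ back to $A$. Your route is slightly cleaner in that it avoids the auxiliary H\"older step $\int r^{d-2}\om\lec R^{\frac{d-2}{d-1}}$ and the separate optimization of a global cutoff; the paper's route is a bit more direct in that it never leaves the variable $S$.
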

\begin{proof}
First, by~\eqref{prob},
\[
  \int\limits_{S \gec 1} \log^p(2 + S^{-1}) \; \om \omb
  \lec \int \om \omb \lec_{\om^0} 1.
\]

Next, where $S \ll 1$, we have
\[
  \rb \sim r, \qquad (r-\rb)^2 + (z - \zb)^2 \ll r^2, \qquad
  \log(2 + S^{-1}) \lec \log\left(\frac{r^2}{(r-\rb)^2 + (z - \zb)^2} \right).
\]
Divide the region $S\ll1$ to two subregions depending on the sizes of $r^2$ and $\frac{1}{(r-\rb)^2 + (z - \zb)^2}$. 
Using Jensen's inequality,
\[
\begin{split}
  \int\limits_{S \ll 1, \; r^2 \geq \frac{1}{(r-\rb)^2 + (z - \zb)^2}} & \log^p(2 + S^{-1}) \om \omb
  \lec \int\limits_{(\Pi_+)^2, \; r \geq 2} \log^p (r^2) \; \om \omb
  \lec_{\om^0} \int\limits_{\Pi_+} \log^p (2+r^{d-1}) \; \om \\
  &\le c \int\limits _{\Pi_+} \log^p(2+r^{d-1})  \frac  \om c ,
  \qquad c = \int\limits_{\Pi_+} \om  \\
  &\le c \log^p \int\limits_{\Pi_+} (2+r^{d-1})  \frac \om c
  \lec_{\om^0} \log^p (2 + R) .
\end{split}
\]

Finally, by~\eqref{normpres} and~\eqref{ass2}, we have
\[
  \frac{\om(\rb,\zb,t)}{\rb^{d-2}} \leq  
  \left\| \frac{\om^0}{r^{d-2}} \right\|_{L^\infty} 
  \;\; \implies \;\;
  \om(\rb,\zb,t) \lec_{\om^0 }\rb^{d-2} \lec r^{d-2}.
\]
So for $0 < \rho \leq \rho_0=0.01$, %
\[
\begin{split}
  &\int\limits_{S \ll 1, \; r^2 \leq \frac{1}{(r-\rb)^2 + (z - \zb)^2}} \log^p(2 + S^{-1}) \; \om \omb \\ 
  &\lec
  \int\limits_{(\Pi_+)^2, \; (r-\rb)^2 + (z - \zb)^2 \leq \rho^2}
  \log^p \left(\frac{1}{(r-\rb)^2 + (z - \zb)^2} 
  \right) \om \omb +
  \log^p \left(\frac{1}{\rho^2} \right) \int\limits_{(\Pi_+)^2} 
  \om \omb \\
  & \lec_{\om^0} \iint\limits_{\Pi_+} r^{d-2} \; \om 
  \iint\limits_{(r-\rb)^2 + (z - \zb)^2 \leq \rho^2}
  \log^p \left(\frac{1}{(r-\rb)^2 + (z - \zb)^2} \right)
  d \rb d \zb + \log^p \left(\frac{1}{\rho^2} \right)\\
  &\lec_p \rho^2 \log^p \left(\frac{1}{\rho^2} \right) \iint\limits_{\Pi_+} r^{d-2} \om  +  
  \log^p \left(\frac{1}{\rho^2} \right) \lec_{\om^0}
  \left( \rho^2 R^{\frac{d-2}{d-1}} + 1 \right) \log^p \left(\frac{1}{\rho^2} \right)
\end{split}
\]
where we used H\"older and~\eqref{prob} to get 
$\int_{\Pi_+} r^{d-2} \om \lec_{\om^0} R^{\frac{d-2}{d-1}}$.
Choose $\rho=\min(\rho_0, R^{-\frac{d-2}{2(d-1)}})$,
and add together the three contributions above to finish the proof.
\end{proof}

Return now to~\eqref{EHolder} and use Lemma~\ref{log}:
\[
  E_\e \lec_{\de,\om^0} \e^{a} R^{\frac{2\mu}{d-1}} \log (2 + R).
\]
So we may choose $\e = \e(t)$ small enough so that
\begin{equation} \label{epschoice}
  \e^{a} \sim_{\de,\om^0} \frac{1}{
  R^{\frac{2 \mu}{d-1}} \log(2 + R)}
  \;\; \implies \;\;
  E_\e(t) \leq \frac{1}{2} E
  \;\; \implies \;\;
  E \lec E_\e^c(t).
\end{equation}

\subsubsection{Estimates in $\Si_\e^c$, $d=3$}

In $\Sigma_\e^c$, by~\eqref{Sig},
\begin{equation}  \label{ebound}
  \e^d e \lec  \frac{z \zb (r \rb)^{d-1}  \log(2 + S^{-1})}{[(r-\rb)^2 + (z + \zb)^2]
[(r+\rb)^2 + (z + \zb)^2]^{\frac{d}{2}}} .
\end{equation}

Now specialize to $d=3$ (we treat $d \geq 4$ in the next subsection).
Denote the integrand of the upper bound of $\dt R$ in \eqref{Rdotest} by
\[
K= \frac{ (z + \zb) ( r \rb) ^{2}}
  {[(r-\rb)^2 + (z + \zb)^2]
[(r+\rb)^2 + (z + \zb)^2]^{\frac{3}{2}}}.
\]
Using 
\[
  z \zb \leq (z + \zb)^2 \leq (r - \rb)^2 + (z + \zb)^2 
  \quad \mbox{ and } \quad z \zb + r \rb \lec (z + \zb)^2 + (r + \rb)^2
\]
in~\eqref{ebound}, we find, for any $0< \nu < 1$,
\EQN{
   \e^3 e &\lec (z \zb)^{\frac\nu 2}\bkt{ \frac{(r \rb)^2} { [(z + \zb)^2 + (r + \rb)^2]^{\frac32}}}^{1-\nu} K^\nu \log(2 + S^{-1})
\\
&\lec    (r \rb)^{\frac12-\frac\nu 2+m }   (z \zb)^{\frac\nu 2-m}
   K^\nu \log(2 + S^{-1}),
}
for any
\begin{equation} \label{mcond3}
0 \le m \le \frac{3}{2}(1-\nu), \quad m <\frac \nu 2
\end{equation}
(the last condition ensures a positive exponent).
Then by H\"older and~\eqref{Rdotest},
\[
  \e^3 E_\e^c \lec 
  \left( \int_{\Pi_+^2} (r \rb)^{\frac{\frac12-\frac\nu 2+m } q} \om \omb \right)^q
  \left( \int_{\Pi_+^2} (z \zb)^{\frac{\frac\nu 2-m}{p}} \om \omb \right)^p
  ( \dt R )^\nu 
  \| \log(2 + S^{-1}) \|_{L^\frac{1}{\de}(\om \omb)}
\]
where $ q + p +\nu + \de = 1$.
Now we make specific exponent choices:
\begin{equation} \label{mcond4}
q=\frac12 \bke{\frac12-\frac\nu 2+m },\quad
p = \frac\nu 2-m,\quad
m = \frac 52\nu -\frac32+2\de,
\end{equation}
yielding
\[
\begin{split}
  \e^3 E_\e^c &\lec 
  \left( \int_{\Pi_+^2} (r \rb)^{2} \om \omb \right)^q
  \left( \int_{\Pi_+^2} (z \zb) \om \omb \right)^p
  ( \dt R) ^\nu 
  \| \log(2 + S^{-1}) \|_{L^\frac{1}{\de}(\om \omb)} \\
  &\lec_{\de,\om^0} R^{2q} Z^{2p} (\dt R)^\nu \log(2 + R) 
  \lec_{\om^0} R^{2q} (\dt R)^\nu \log(2+R) 
\end{split}
\]
using Lemma~\ref{log}, and~\eqref{zbound}.
Then from~\eqref{coe} and~\eqref{epschoice},
\[
\begin{split}
  1 &\lec_{\om^0} E \lec E_\e^c \lec_{\de,\om^0} \e^{-3} R^{2q} 
  (\dt R)^\nu \log(2+R) \\
  &\lec_{\de,\om^0} R^{2q} \log^{4}(2+R) (\dt R)^\nu
  = \left[ R^{\frac{2q}{\nu}} \log^{\frac{4}{\nu}}(2+R) \dt R \right]^\nu .
\end{split}
\]
It follows that for any $\eta> \frac{2q}{\nu}$, 
\[
   1 \lec_{ \eta,\om^0} R^{ \eta} \dt R \; \sim \; 
  \frac{d}{dt} \left( R^{ \eta + 1} \right).
\]
Integrating this differential inequality yields
\[
  R(t) \gec_{b,\om^0} (1+t)^b, \quad \mbox{ for any } \; 
  b^{-1}> \frac{2q}{\nu}+1 .%
\]
To minimize $\frac{2q}{\nu}= \frac {1+2m}{2\nu}-\frac 12$,
subject to \eqref{mcond3} and  \eqref{mcond4},  
we choose
\[
\nu = \frac 35,\quad 0<m=2\de\ll 1, \quad q=\frac1{10}+\de,\quad p=\frac3{10}-2\de,
\quad \frac{2q}{\nu}+1 = \frac 43+2\de.
\]
By taking $\de>0$ arbitrarily small, we get $R(t) \gec_{b,\om^0} (1+t)^b$ for any $ b^{-1}>\frac 43$.
which completes the proof of the $d=3$ case of Theorem~\ref{lower}.

\subsubsection{Estimates in $\Si_\e^c$, $d \geq 4$}

Now assume $d \geq 4$. Using 
\[
  z \zb \leq (z + \zb)^2 \leq (r - \rb)^2 + (z + \zb)^2 \;\;
  \mbox{ and } \;\; r \rb, \; (z+\zb)^2 \leq (z + \zb)^2 + (r + \rb)^2
\]
in~\eqref{ebound}, we find, for any $0 \leq \nu \leq \frac{d}{d+1}$,
\[
   \e^d e \lec (r \rb)^{\frac{1}{2}(d-2 -(d-3)\nu)} 
   \left[ \frac{ (z + \zb) ( r \rb) ^{(d-1)}}
  {[(r-\rb)^2 + (z + \zb)^2]
[(r+\rb)^2 + (z + \zb)^2]^{\frac{d}{2}}} \right]^\nu
\log(2 + S^{-1})
\]
We choose
\[
  \nu = \frac{d-2(d-1)\de}{d+1} \quad \mbox{ and } \quad
  q = 1 - \nu - \de = \frac{1 + (d-3)\de}{d+1},
\]
so that by H\"older's inequality,
\begin{multline*}
  \e^d E_\e^c \lec \left( \int_{\Pi_+^2} (r \rb)^{d-1} \om \omb \right)^q
  \left( \int_{\Pi_+^2}   \frac{ (z + \zb) ( r \rb) ^{(d-1)} \om \omb}
  {[(r-\rb)^2 + (z + \zb)^2] [(r+\rb)^2 + (z + \zb)^2]^{\frac{d}{2}}}
  \right)^\nu \cdot\\
  \cdot \| \log(2 + S^{-1}) \|_{L^\frac{1}{\de}(\om \omb)},
\end{multline*}
and then by~\eqref{Rdotest} and Lemma~\ref{log}
\[
  E_\e^c \lec_\de \e^{-d} 
  R^{\frac{2 + 2(d-3)\de}{d+1}}
  ( \dt R)^\nu \log(2 + R)  \lec
  \left[ \e^{-\frac{d}{\nu} } R^\g \log^{\frac{1}{\nu}}(2+R) \dt R \right]^\nu
\]
with 
\[
  \g = \frac{2 + 2(d-3)\de}{d - 2(d-1) \de} = \frac{2}{d} + O(\de).
\]
Then from~\eqref{coe} and~\eqref{epschoice},
\[
  1 \lec_{\om^0} E^{\frac{1}{\nu}} \lec (E_\e^c)^{\frac{1}{\nu}} \lec_\de \e^{-\frac{d}{\nu}} R^{\g} 
  \log^{\frac{1}{\nu}}(2+R) \dt R 
  \lec_{\de,\om^0} R^{\eta} 
  \log^{\frac{a + d}{a\nu}}(2 + R) \dt R
\]
with 
\[  
  \eta = \g + \frac{2 d \mu}{(d-1)a\nu}
  = \frac{2 + (d-4)(d+1)}{d} + O(\de).
\]
It follows that
\[
 \mbox{ for any } \; \tilde \eta >
 \frac{2 + (d-4)(d+1)}{d}, \qquad
 1 \lec_{\tilde \eta, \om^0} R^{\tilde \eta} \dt R
 \sim \frac{d}{dt} \left( R^{\tilde \eta + 1} \right).
\]
Integrating this differential inequality yields
\[
  R(t) \gec_{b,\om^0} (1+t)^b, \;\; \mbox{ for any } \; 
  b < \frac{d}{d^2-2d-2},
\]
which completes the proof of the $d \geq 4$ cases of Theorem~\ref{lower}.
$\Box$

\section{Appendix: a calculus lemma}

\begin{lemma}
\label{th10.1}
Let $f(x)=x^{-\al}$, $\al>0$. For $x,y,z>0$, we have
\EQ{\label{th10.1a}
f(x) - f(x+y) \sim f(x) \frac y {x+y},
}
\EQ{\label{th10.1b}
f(x) - f(x+y) -f(x+z) + f(x+y+z) \sim f(x) \frac y {x+y}\frac z {x+z}.
}
\end{lemma}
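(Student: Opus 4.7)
The plan is to prove~\eqref{th10.1a} first by a scaling reduction, and then to derive~\eqref{th10.1b} from~\eqref{th10.1a} via an integral representation.

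For~\eqref{th10.1a}, both sides are homogeneous of degree $-\al$ in $(x,y)$, so setting $t = y/x > 0$ reduces the claim to showing $1 - (1+t)^{-\al} \sim \frac{t}{1+t}$ for $t > 0$. Writing $1 - (1+t)^{-\al} = \frac{(1+t)^\al - 1}{(1+t)^\al}$, this further reduces to the elementary equivalence $(1+t)^\al - 1 \sim t(1+t)^{\al-1}$: both sides are continuous and strictly positive on $(0,\infty)$, and asymptote to $\al t$ as $t \to 0^+$ and to $t^\al$ as $t \to \infty$, so their ratio extends continuously to $[0,\infty]$ with positive finite endpoint values and is therefore bounded above and below by positive constants depending only on $\al$.

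For~\eqref{th10.1b}, the plan is to express the second-order difference as a double integral of $f''(u) = \al(\al+1) u^{-\al-2}$:
\[
  f(x) - f(x+y) - f(x+z) + f(x+y+z) = \al(\al+1) \int_0^y \int_0^z (x+s+v)^{-\al-2} \dd v \dd s.
\]
Evaluating the inner integral, and then applying~\eqref{th10.1a} with exponent $\al+1$ to the resulting bracket, yields
\[
  f(x) - f(x+y) - f(x+z) + f(x+y+z) \sim \int_0^y (x+s)^{-\al-1} \frac{z}{x+s+z} \dd s.
\]
Rescaling $s = xt$ and writing $\be = y/x$, $\g = z/x$, both sides become $x^{-\al}$ times a dimensionless quantity, so it remains to verify
\[
  \int_0^\be (1+t)^{-\al-1} \frac{\g}{1+t+\g} \dd t \sim \frac{\be}{1+\be} \cdot \frac{\g}{1+\g}.
\]

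This last step will be a short case analysis on $\be$. If $\be \le 1$, then $(1+t) \sim 1$ and $1+t+\g \sim 1+\g$ on $[0,\be]$, so both sides are $\sim \be \cdot \frac{\g}{1+\g}$. If $\be > 1$, split the integral at $t=1$: the part over $[0,1]$ is $\sim \frac{\g}{1+\g}$ by the previous case, while the part over $[1,\be]$ is controlled by $\min(1,\g) \sim \frac{\g}{1+\g}$ via the crude bound $\frac{\g}{1+t+\g} \le \min(1,\g/t)$ integrated against $(1+t)^{-\al-1} \sim t^{-\al-1}$. Since $\frac{\be}{1+\be} \sim 1$ in this regime, summing the two contributions matches the target. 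The only real obstacle is the bookkeeping in this case split; no delicate cancellation is required, because~\eqref{th10.1a} has already absorbed the signed structure of the second difference.
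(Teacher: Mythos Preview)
Your proof is correct. For \eqref{th10.1a}, you and the paper both split into small-vs-large $y/x$, but you package this as a compactness argument on the ratio after scaling, while the paper uses the mean value theorem directly when $y \lesssim x$ and a trivial bound when $y \gg x$. (Minor slip: at $t\to 0^+$ the two sides of your reduced inequality behave like $\alpha t$ and $t$ respectively, not both like $\alpha t$; this is harmless since only the positivity and finiteness of the limiting ratio matters.)

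For \eqref{th10.1b}, both arguments reduce to \eqref{th10.1a} by exploiting that $f'$ is again a power. The paper writes the second difference as $y\,\partial_y g(x,\theta y,z)$ via the mean value theorem and then applies \eqref{th10.1a} to $-f'$; it disposes of the regime $y,z \gg x$ separately, where all three subtracted terms are $\le \tfrac14 f(x)$ so the left side is trivially $\sim f(x)$. Your route---write the second difference as $\iint f''$, collapse the inner integral, apply \eqref{th10.1a} with exponent $\alpha+1$, rescale, and case-split on $\beta=y/x$---is slightly more systematic and avoids a separate ``both large'' case, at the cost of the explicit integral estimate on $[1,\beta]$. Either way the bookkeeping is routine; your approach makes the homogeneity more visible, while the paper's makes positivity of the second difference more immediate.
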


\begin{proof}
Fix $L>0$ such that $f(x+y) <\frac 14 f(x) $ if $y>Lx$, e.g., $L=4^{\frac 1\al}-1$.
If $y >Lx$, then
\[
f(x) > f(x) - f(x+y) > \frac 34 f(x)
\]
while $\frac y {x+y} \sim 1$. If $x<y<Lx$, then for some $\th \in [0,1]$,
\[
 f(x) - f(x+y) = -f'(x+\th y) y \sim f(x) \frac y{x+y}.
 \]
The above shows \eqref{th10.1a}. For \eqref{th10.1b}, denote its left side as $g(x,y,z)$. If $x<y<Lx$, then for some $\th \in [0,1]$ we have
\EQN{
g(x,y,z) &= g(x,y,z)-g(x,0,z) = y \pd_y g(x,\th y,z)
\\
&=y[ - f'(x+\th y) + f'(x+\th y+z) ].
}
Since $f'(x) = -\al x^{-\al-1}$ has a similar form as $f(x)$, by  \eqref{th10.1a},
\[
g(x,y,z) \sim y |f'(x+\th y) |\frac {z}{x+\th y+z}\sim y \frac {f(x)}{x+y} \frac {z}{x+z}
\]
 If $x<z<Lx$, we have the same estimate by symmetry. Finally, if both $y,z>Lx$, then
 \[
 f(x)+ \frac 14 f(x)  > g(x,y,z) \ge f(x) - \frac 14 f(x) - \frac 14 f(x) 
 \]
 while $\frac y {x+y}\frac z {x+z} \sim 1$. The above shows \eqref{th10.1b}. 
\end{proof}

\section*{Acknowledgements}
The authors were partially supported by NSERC under the grants
RGPIN-2018-03847 and RGPIN-2018-04137.
EM was also supported by the Pacific Institute for the Mathematical Sciences as a PIMS postdoctoral fellow.

\addcontentsline{toc}{section}{\protect\numberline{}{References}}


\begin{bibdiv}
\begin{biblist}

\bib{MR2429247}{article}{
      author={Chen, Chiun-Chuan},
      author={Strain, Robert~M.},
      author={Yau, Horng-Tzer},
      author={Tsai, Tai-Peng},
       title={Lower bound on the blow-up rate of the axisymmetric
  {N}avier-{S}tokes equations},
        date={2008},
        ISSN={1073-7928},
     journal={Int. Math. Res. Not. IMRN},
      number={9},
       pages={Art. ID rnn016, 31},
         url={https://doi-org.ezproxy.library.ubc.ca/10.1093/imrn/rnn016},
      review={\MR{2429247}},
}

\bib{ChenHou}{article}{
      author={Chen, Jiajie},
      author={Hou, Thomas},
       title={Stable nearly self-similar blowup of the 2d boussinesq and 3d
  euler equations with smooth data},
        date={2022},
        note={arXiv:2210.07191},
}

\bib{ChengLouLim}{article}{
      author={Cheng, M.},
      author={Lou, J.},
      author={Lim, T.T.},
       title={Numerical simulation of head-on collision of two coaxial vortex
  rings},
        date={2018},
     journal={Fluid Dyn. Res.},
      volume={50},
      number={24},
       pages={065513},
}

\bib{ChoiJeong}{article}{
      author={Choi, Kyudong},
      author={Jeong, In-Jee},
       title={On vortex stretching for anti-parallel axisymmetric flows},
        date={2021},
        note={arXiv:2110.09079},
}

\bib{ChuWangChangs}{article}{
      author={Chu, C.},
      author={Wang, C.},
      author={Chang, C},
      author={Chang, R},
      author={Chang, W},
       title={Head-on collision of two coaxial vortex rings: Experiment and
  computation},
        date={1995},
     journal={J. Fluid Mech.},
      volume={296},
       pages={39\ndash 71},
}

\bib{Danchin1}{article}{
      author={Danchin, Rapha\"{e}l},
       title={Axisymmetric incompressible flows with bounded vorticity},
        date={2007},
        ISSN={0042-1316},
     journal={Uspekhi Mat. Nauk},
      volume={62},
      number={3(375)},
       pages={73\ndash 94},
         url={https://doi.org/10.1070/RM2007v062n03ABEH004412},
      review={\MR{2355419}},
}

\bib{Danchin2}{article}{
      author={Danchin, Rapha\"{e}l},
       title={On perfect fluids with bounded vorticity},
        date={2007},
        ISSN={1631-073X},
     journal={C. R. Math. Acad. Sci. Paris},
      volume={345},
      number={7},
       pages={391\ndash 394},
         url={https://doi.org/10.1016/j.crma.2007.09.002},
      review={\MR{2361504}},
}

\bib{Elgindi}{article}{
      author={Elgindi, Tarek},
       title={Finite-time singularity formation for {$C^{1,\alpha}$} solutions
  to the incompressible {E}uler equations on {$\Bbb R^3$}},
        date={2021},
        ISSN={0003-486X},
     journal={Ann. of Math. (2)},
      volume={194},
      number={3},
       pages={647\ndash 727},
         url={https://doi.org/10.4007/annals.2021.194.3.2},
      review={\MR{4334974}},
}

\bib{FengSverak}{article}{
      author={Feng, Hao},
      author={\v{S}ver\'{a}k, Vladim\'{\i}r},
       title={On the {C}auchy problem for axi-symmetric vortex rings},
        date={2015},
        ISSN={0003-9527},
     journal={Arch. Ration. Mech. Anal.},
      volume={215},
      number={1},
       pages={89\ndash 123},
         url={https://doi.org/10.1007/s00205-014-0775-4},
      review={\MR{3296145}},
}

\bib{GWRW}{article}{
      author={Guan, Hui},
      author={Wei, Zhi-Jun},
      author={Rasolkova, Elizabeth~Rumenova},
      author={Wu, Chui-Jie},
       title={Numerical simulations of two coaxial vortex rings head-on
  collision},
        date={2016},
     journal={Adv. Appl. Math. Mech},
      volume={8},
      number={4},
       pages={616\ndash 647},
}

\bib{GMT1}{article}{
      author={Gustafson, Stephen},
      author={Miller, Evan},
      author={Tsai, Tai-Peng},
       title={Regularity of axisymmetric, swirl-free {E}uler flows in higher
  dimensions},
        date={2023},
     journal={preprint},
}

\bib{Ladyzhenskaya}{article}{
      author={Lady\v{z}enskaya, O.~A.},
       title={Unique global solvability of the three-dimensional {C}auchy
  problem for the {N}avier-{S}tokes equations in the presence of axial
  symmetry},
        date={1968},
     journal={Zap. Nau\v{c}n. Sem. Leningrad. Otdel. Mat. Inst. Steklov.
  (LOMI)},
      volume={7},
       pages={155\ndash 177},
      review={\MR{0241833}},
}

\bib{LimNickels}{article}{
      author={Lim, T.T.},
      author={Nickels, T.B.},
       title={Instability and reconnection in the head-on collision of two
  vortex rings},
        date={1992},
     journal={Nature},
      volume={357},
       pages={225\ndash 227},
}

\bib{Oshima}{article}{
      author={Oshima, Y.},
       title={Head-on collision of two vortex rings},
        date={1978},
     journal={J. Phys. Soc. Japan},
      volume={44},
      number={44},
       pages={328\ndash 331},
}

\bib{SaintRaymond}{article}{
      author={Saint~Raymond, X.},
       title={Remarks on axisymmetric solutions of the incompressible {E}uler
  system},
        date={1994},
        ISSN={0360-5302},
     journal={Comm. Partial Differential Equations},
      volume={19},
      number={1-2},
       pages={321\ndash 334},
         url={https://doi.org/10.1080/03605309408821018},
      review={\MR{1257007}},
}

\bib{Serfati}{article}{
      author={Serfati, Philippe},
       title={R\'{e}gularit\'{e} stratifi\'{e}e et \'{e}quation d'{E}uler
  {$3$}{D} \`a temps grand},
        date={1994},
        ISSN={0764-4442},
     journal={C. R. Acad. Sci. Paris S\'{e}r. I Math.},
      volume={318},
      number={10},
       pages={925\ndash 928},
      review={\MR{1278153}},
}

\bib{ShariffLeonard}{article}{
      author={Shariff, K.},
      author={Leonard, A},
       title={Vortex rings},
        date={1992},
     journal={Ann. Rev. Fluid Mech.},
      volume={24},
      number={24},
       pages={235\ndash 279},
}

\bib{Yudovich}{article}{
      author={Ukhovskii, M.~R.},
      author={Yudovich, V.~I.},
       title={Axially symmetric flows of ideal and viscous fluids filling the
  whole space},
        date={1968},
        ISSN={0021-8928},
     journal={J. Appl. Math. Mech.},
      volume={32},
       pages={52\ndash 61},
         url={https://doi.org/10.1016/0021-8928(68)90147-0},
      review={\MR{0239293}},
}

\end{biblist}
\end{bibdiv}

\end{document}